\newtheorem{theorem}{Theorem}[section]
\newtheorem{proposition}{Proposition}[section]
\theoremstyle{remark}
\newtheorem{remark}{Remark}[section]
\numberwithin{equation}{section}
\newcommand\sap{\mathrel{\Bumpeq\!\!\!\!\!\!\!\longrightarrow}}
\newcommand\finis{\hfill$\triangleleft$}
\begin{document}

\title{Multiple periodic solutions for one-sided sublinear systems: A refinement of the Poincar\'{e}-Birkhoff approach}

\author{{\Large Tobia Dond\`{e} and Fabio Zanolin}
\\
{}
\\
\vspace{2mm} {\it\small Department of Mathematics, Computer
Science and Physics}
\\
{\it\small University of Udine}, {\it\small  Via delle Scienze
206, 33100 Udine  - Italy} }
\date{}

\maketitle

\begin{abstract}
In this paper we prove the existence of multiple periodic solutions (harmonic and subharmonic)
for a class of planar Hamiltonian systems which include the case of the second order scalar ODE $x'' + a(t)g(x) = 0$
with $g$ satisfying a one-sided condition of sublinear type.  We consider the classical approach based on the Poincar\'{e}-Birkhoff
fixed point theorem as well as some refinements on the side of the theory of bend-twist maps and topological horseshoes.
The case of complex dynamics is investigated, too.

\noindent
\textbf{Keywords:} Poincar\'{e}-Birkhoff theorem, bend-twist maps, periodic solutions, topological horse\-shoes, complex oscillations

\noindent
\textbf{MSC 2010:} 34C25, 34C28, 54H20
\end{abstract}

\section{Introduction}\label{section-1}

The Poincar\'{e}-Birkhoff fixed point theorem deals with a planar homeomorphism $\Psi$
defined on an annular region $A$, such that $\Psi$ is area-preserving, leaves the boundary of $A$
invariant and rotates the two components of $\partial A$ in opposite directions
(twist condition). Under these assumptions, in 1912 Poincar\'{e} conjectured (and proved in some particular cases)
the existence of at least two fixed points for $\Psi$, a result known as ``the Poincar\'{e} last geometric theorem''.
A proof for the existence of at least one fixed point (and actually two in a non-degenerate situation) was obtained by Birkhoff in 1913
\cite{Bi-13}.
In the subsequent years Birkhoff reconsidered the theorem as well as its possible extensions to a more general setting,
for instance, removing the assumption of boundary invariance, or proposing some hypotheses of topological nature
instead of the area-preserving condition, thus opening a line of research that is still active today (see for example \cite{Ca-82}, \cite{Bon-12},
and references therein). The skepticism of some mathematicians about the correctness of the proof of the second fixed point
motivated Brown and Neumann  to present in \cite{BN-77} a full detailed proof, adapted from Birkhoff's 1913 paper,
in order to eliminate previous possible controversial aspects. Another approach for the proof of the second fixed point
has been proposed in \cite{Sl-93}, coupling \cite{Bi-13} with a result for removing fixed points of zero index.

In order to express the twist condition in a more precise manner, the statement of the Poincar\'{e}-Birkhoff
theorem is usually presented in terms of the lifted map $\tilde{\Psi}$. Let us first introduce some notation.
Let $D(R)$ and $D[R]$ be, respectively, the open and the closed disc
of center the origin and radius $R > 0$ in ${\mathbb R}^2$ endowed with the Euclidean norm $||\cdot||.$
Let also $C_R:= \partial D(R).$
Given $0 < r < R,$ we denote by $A$ or $A[r,R]$ the closed annulus $A[r,R]:= D[R]\setminus D(r).$ Hence the area-preserving
(and orientation-preserving) homeomorphism
$\Psi: A\to \Psi(A)=A$ is lifted to a map
$\tilde{\Psi}: \tilde{A}\to \tilde{A},$ where $\tilde{A}:= {\mathbb R}\times[r,R]$
is the covering space of $A$ via the covering projection $\Pi: (\theta,\rho)\mapsto (\rho\cos\theta,\rho\sin\theta)$ and
\begin{equation}\label{eq-0.1}
\tilde{\Psi}: (\theta,\rho)\mapsto (\theta + 2\pi \mathcal{J}(\theta,\rho), \mathcal{S}(\theta,\rho)),
\end{equation}
with the functions $\mathcal{J}$ and $\mathcal{S}$ being $2\pi$-periodic in the $\theta$-variable.
Then, the classical (1912-1913) Poincar\'{e}-Birkhoff fixed point theorem can be stated as follows (see \cite{BN-77}).
\begin{theorem}\label{pb-01}
Let $\Psi: A\to \Psi(A) = A$ be an area preserving homeomorphism such that the following two conditions are satisfied:
\begin{description}
\item[$(PB1)\;$] $\mathcal{S}(\theta,r)= r, \quad \mathcal{S}(\theta,R)= R, \quad \forall\, \theta\in {\mathbb R};$
\item[$(PB2)\;$] $\exists \, j\in {\mathbb Z}: \; (\mathcal{J}(\theta,r) - j)(\mathcal{J}(\theta,R) - j) < 0, \quad \forall\, \theta\in {\mathbb R}.$
\end{description}
Then $\Psi$ has at least two fixed points $z_1, z_2$ in the interior of $A$ and $\mathcal{J}(\theta,\rho)=j$ for $\Pi(\theta,\rho) = z_i\,.$
\end{theorem}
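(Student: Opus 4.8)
The plan is to reduce to the case $j = 0$ and then split the argument into two parts: the production of a first fixed point, which exploits the twist together with area-preservation, and the production of a second one, which is forced by an index or min-max argument. For the reduction I would compose the lift with the deck transformation $(\theta,\rho) \mapsto (\theta - 2\pi j, \rho)$, obtaining a new lift $\hat\Psi(\theta,\rho) = (\theta + 2\pi(\mathcal{J}(\theta,\rho) - j), \mathcal{S}(\theta,\rho))$ of the same $\Psi$. A point of $\tilde A$ fixed by $\hat\Psi$ satisfies $\mathcal{J}(\theta,\rho) = j$ and $\mathcal{S}(\theta,\rho) = \rho$, so it projects under $\Pi$ to a fixed point of $\Psi$ with rotation number $j$, which is exactly the desired conclusion. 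Writing the angular displacement $\alpha(\theta,\rho) := 2\pi(\mathcal{J}(\theta,\rho) - j)$, condition $(PB2)$ becomes the sign condition $\alpha(\theta,r)\,\alpha(\theta,R) < 0$ for all $\theta$; after possibly exchanging the two boundary circles I may assume $\alpha(\cdot,r) < 0 < \alpha(\cdot,R)$, while $(PB1)$ ensures that the radial displacement $\mathcal{S}(\theta,\rho) - \rho$ vanishes on $\partial\tilde A$ and that each boundary circle is invariant.

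For the first fixed point I would argue by contradiction, assuming $\hat\Psi$ has no fixed point in the interior, so that the displacement field $(\theta,\rho) \mapsto (\alpha(\theta,\rho),\, \mathcal{S}(\theta,\rho) - \rho)$ never vanishes. The twist makes $\alpha$ strictly negative near $C_r$ and strictly positive near $C_R$, so any path joining the two boundary circles crosses the compact set $K := \{\alpha = 0\}$; hence $K$ separates $C_r$ from $C_R$ in $A$ and, by a standard separation lemma, contains a continuum $\Gamma$ that still separates them. On the connected set $\Gamma$ the displacement is purely radial and nowhere zero, hence of constant sign, so $\Psi$ pushes $\Gamma$ strictly to one side. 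Comparing the area of the region enclosed between $C_r$ and $\Gamma$ with the area of its image then contradicts area-preservation, yielding a first fixed point $z_1$.

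The second fixed point is the crux. One route is variational: when $\Psi$ is, or can be approximated by, a monotone twist map it admits a generating function, the fixed points of rotation number $j$ become the critical points of an action functional on the circle, and Lusternik--Schnirelmann category gives at least $\mathrm{cat}(S^1) = 2$ of them. A more topological route, closer to the original argument and to \cite{Sl-93}, is to observe that a fixed point of zero index can be removed by an area-preserving perturbation without creating new ones; since the perturbed map still twists and must, by the first step, have a fixed point, no such removal can eliminate all of them, forcing at least two genuine fixed points of $\Psi$. The main obstacle throughout is rigor in the merely continuous, non-generic category: neither the separation continuum $\Gamma$ nor the index count is automatically well-behaved, and excluding the single-fixed-point configuration is precisely the historically contentious point. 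For a fully rigorous conclusion I would follow the detailed index-theoretic argument of Brown and Neumann \cite{BN-77}.
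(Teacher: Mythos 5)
First, a point of reference: the paper does not prove Theorem \ref{pb-01} at all --- it is quoted as the classical result with the proof delegated to Brown and Neumann \cite{BN-77} (and, for the second fixed point, alternatively to \cite{Sl-93}). So your proposal can only be measured against those classical arguments, and against them it has two genuine gaps, both located exactly at the historically contentious steps. Your reduction to $j=0$ via the deck transformation is correct and standard, and your identification of fixed points of the lift $\hat\Psi$ with fixed points of $\Psi$ of rotation number $j$ is fine. The first gap is the area-comparison producing $z_1$. Your set $K=\{\alpha=0\}$ does contain a continuum $\Gamma$ separating $C_r$ from $C_R$, and on $\Gamma$ the displacement is radial, nonvanishing, hence of constant sign. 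But ``$\Psi$ pushes $\Gamma$ strictly to one side, so the region between $C_r$ and $\Gamma$ gains area'' is a non sequitur: $\Gamma$ is merely a continuum, not a graph over $\theta$, and a ray from the origin may meet $\Gamma$ in a complicated set. If $U$ denotes the component of $A\setminus\Gamma$ containing $C_r$, a point $q\in\Gamma$ moved strictly radially outward can perfectly well land back in $U$ (the ray re-enters $U$ after further crossings of $\Gamma$), so pointwise outward displacement on $\Gamma$ does not give $U\subsetneq\Psi(U)$ and no contradiction with area-preservation follows. This is precisely the trap that naive proofs fall into; the Birkhoff/Brown--Neumann argument avoids it entirely, working instead with the lift composed with a small translation and a winding-number (index) computation that never assumes the zero set of the angular displacement is curve-like.

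The second gap is the second fixed point, which you acknowledge is the crux but do not actually handle. The variational route is unavailable at this level of generality: Theorem \ref{pb-01} assumes only a continuous area-preserving homeomorphism with twist \emph{on the boundary}, so there is no monotone twist, no generating function, and no justification for approximating $\Psi$ by monotone twist maps (nor would fixed points survive such an approximation without a priori index control). The Slaminka route is indeed the one the paper's introduction mentions, but invoking it requires the missing quantitative ingredient: one must first show that the fixed point index carried by the fixed points of rotation number $j$ sums to zero (a Lefschetz-type computation for the lift, using the boundary twist), so that a \emph{single} such fixed point would necessarily have index $0$ and be removable; one must also arrange the removing perturbation to be area-preserving and to retain $(PB1)$--$(PB2)$, and deal with possibly non-isolated fixed point sets. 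None of this is supplied, and your closing sentence defers the entire rigorous argument to \cite{BN-77}. In sum: the proposal correctly maps the landmarks of the classical proof, but both load-bearing steps --- the area argument for the first fixed point and the index argument for the second --- are gaps rather than proofs.
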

We refer to condition $(PB1)$ as to the ``boundary invariance'' and we call $(PB2)$ the ``twist condition''.
The function $\mathcal{J}$ can be regarded as a rotation number associated with the points.
In the original formulation of the theorem it is $j=0,$ however any integer $j$ can be considered.

The Poincar\'{e}-Birkhoff theorem is a fundamental result in the areas of fixed point theory and dynamical systems,
as well as in their applications to differential equations. General presentations can be found in \cite{MoZh-05}, \cite{MeHa-92} and \cite{LC-11}.
There is a large literature on the subject and certain subtle and delicate
points related to some controversial extensions of the theorem have been settled only in recent years (see \cite{Re-97},\cite{MU-07},\cite{LCW-10}).
In the applications to the study of periodic non-autonomous planar Hamiltonian systems, the map $\Psi$ is often the Poincar\'{e} map
(or one of its iterates). In this situation the condition of boundary invariance is usually not satisfied, or very difficult to prove: as a consequence, variants of the Poincar\'{e}-Birkhoff theorem in which the hypothesis $(PB1)$ is not required turn out to be quite useful
for the applications (see \cite{DaRe-02} for a general discussion on this topic).
As a step in this direction we present the next result, following from W.Y. Ding in \cite{Di-82}.
\begin{theorem}\label{pb-02}
Let $\Psi: D[R]\to \Psi(D[R])\subseteq {\mathbb R}^2$ be an area preserving homeomorphism with $\Psi(0) = 0$ and such that the twist condition
$(PB2)$ holds.
Then $\Psi$ has at least two fixed points $z_1, z_2$ in the interior of $A$ and $\mathcal{J}(\theta,\rho)=j$ for $\Pi(\theta,\rho) = z_i\,.$
\end{theorem}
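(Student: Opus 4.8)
The plan is to deduce Theorem \ref{pb-02} from the classical version Theorem \ref{pb-01} by modifying $\Psi$ so as to restore the boundary invariance $(PB1)$ while preserving the twist $(PB2)$, the area-preserving homeomorphism structure, and --- crucially --- without manufacturing spurious fixed points carrying rotation number $j$. The hypothesis $\Psi(0)=0$ is what makes this feasible: since $\Psi$ is an orientation- and area-preserving homeomorphism of $D[R]$ fixing the centre, the image $\Psi(A[r,R])$ is a topological annulus winding exactly once around the origin, so the disc $D[r]$ provides free maneuvering room for an inner modification and a collar outside $C_R$ provides room for an outer one.

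First I would treat the inner boundary. Using $\Psi(0)=0$ and area preservation on $D[r]$, I would replace $\Psi$ on $D[r]$ by an area-preserving homeomorphism that agrees with $\Psi$ in a neighbourhood of $C_r$ but reshapes the dynamics so that $C_r$ becomes invariant. Because $(PB2)$ forces $\mathcal{J}(\theta,r)-j$ to keep a constant sign, the interpolation can be arranged so that the angular displacement stays strictly on that same side of $j$ throughout $D(r)$; in particular no fixed point with $\mathcal{J}=j$ is created in the modified inner disc, and the centre $0$ --- a fixed point, but one which does not carry rotation number $j$ --- is harmless.

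Next the outer boundary. I would extend $\Psi$ across a collar $A[R,R']$ onto the region between the Jordan curve $\Psi(C_R)$ and a new circle $C_{R'}$, choosing $R'$ so that the areas match and interpolating area-preservingly so that $C_{R'}$ is left invariant while the angular displacement on $A[R,R']$ remains strictly on the side of $j$ opposite to the inner one. The resulting map $\hat{\Psi}$ is then an area-preserving homeomorphism of $D[R']$ onto itself satisfying $(PB1)$ on $A[r,R']$ and $(PB2)$ with the \emph{same} integer $j$. Theorem \ref{pb-01} then yields two interior fixed points with $\mathcal{J}=j$; by the sign control arranged in the two modified regions these must lie in the original annulus $A[r,R]$, where $\hat{\Psi}=\Psi$, so they are genuine fixed points of $\Psi$ with the asserted rotation number.

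The main obstacle is the \emph{simultaneous} control of all three requirements inside the gluing regions. A naive interpolation of the radial component $\mathcal{S}$ destroys area preservation, so both the inner modification on $D[r]$ and the outer collar on $A[R,R']$ must be built through measure-preserving (symplectic) constructions --- for instance via generating functions or an explicit area-preserving shear --- while still forcing the angular displacement $\mathcal{J}$ to avoid the value $j$ and matching continuously with $\Psi$ along $C_r$ and $\Psi(C_R)$. Once this area bookkeeping is carried out, the remainder is soft: it is a purely topological reduction to Theorem \ref{pb-01}, together with the localisation of the two fixed points away from the manufactured regions.
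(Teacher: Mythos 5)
Your overall route---restore boundary invariance by area-preserving surgeries near the two boundary circles, then invoke Theorem \ref{pb-01} and localize the fixed points by keeping the angular displacement away from $j$ in the surgered regions---is in spirit the reduction that this paper attributes to Rebelo \cite{Re-97}; note that the paper itself gives no proof of Theorem \ref{pb-02}, but refers to \cite{Di-82}, \cite{Re-97} and \cite{LCW-10}. However, your proposal contains a concrete inconsistency and leaves the decisive step unjustified. The inner modification as stated is self-contradictory: a map that ``agrees with $\Psi$ in a neighbourhood of $C_r$'' necessarily sends $C_r$ onto $\Psi(C_r)$, so it cannot at the same time make $C_r$ invariant unless $\Psi(C_r)=C_r$ already. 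The natural repair (render some smaller circle $C_\rho \subseteq D(r)$ invariant and apply Theorem \ref{pb-01} on $A[\rho,R']$) is available, but then the twist at $C_\rho$ and the absence of points with $\mathcal{J}=j$ in the modified region are exactly the claims that must be constructed rather than asserted.

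The genuine gap is your final paragraph, where the heart of the matter is dismissed as ``soft''. Global area bookkeeping is indeed automatic (the area between $\Psi(C_R)$ and $C_{R'}$ equals that of $A[R,R']$ by preservation), but the existence of an area-preserving homeomorphism of the collar onto the region bounded by $\Psi(C_R)$ and $C_{R'}$, agreeing with $\Psi$ on $C_R$, leaving $C_{R'}$ invariant, \emph{and} with lifted angular displacement avoiding $2\pi j$ pointwise, is essentially the whole theorem. The curves $\Psi(C_r)$ and $\Psi(C_R)$ are merely Jordan curves and may spiral wildly; any homeomorphism of a round collar onto such a region must unwind that spiraling, which forces angular displacements at interior points that you do not control by fiat, and composing with Moser-type or Oxtoby--Ulam-type corrections to recover area preservation destroys whatever pointwise control of $\mathcal{J}$ one had arranged. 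This is precisely the delicacy on which published arguments foundered: W. Y. Ding's original proof of this very statement rested on Jacobowitz's pointed-disk version of the Poincar\'{e}-Birkhoff theorem, which Le Calvez and Wang \cite{LCW-10} showed to be false without a strict star-shapedness hypothesis (see also \cite{MU-07}). A complete argument must either carry out such boundary constructions with full care, reducing to the Brown--Neumann version \cite{BN-77} as in \cite{Re-97}, or bypass extension arguments altogether as in \cite{Fr-88}; as it stands, your proposal postpones exactly the step where the known traps lie.
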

\noindent
The proof in \cite{Di-82} (see also \cite[Appendix]{DiZa-92}) relies on the Jacobowitz version of the Poincar\'{e}-Birkhoff theorem
for a pointed topological disk \cite{Ja-76}, \cite{Ja-77} which was corrected in \cite{LCW-10}, since the result is true for strictly star-shaped
pointed disks and not valid in general, as shown by a counterexample in the same article. Another (independent) proof of Theorem \ref{pb-02} was
obtained by Rebelo in \cite{Re-97}, who brought the proof back to that of Theorem \ref{pb-01} and thus to the ``safe'' version of Brown and Neumann
\cite{BN-77}. Other versions of the Poincar\'{e}-Birkhoff theorem
giving Theorem \ref{pb-02} as a corollary can be found in \cite{Fr-88},\cite{Fr-06},\cite{QT-05},\cite{Ma-13}
(see also \cite[Introduction]{FSZ-12} for a general discussion about
these delicate aspects).
For Poincar\'{e} maps associated with Hamiltonian systems there is a much more general version of the theorem due to Fonda and Ure\~{n}a
in \cite{FU-16},\cite{FU-17}, which will be recalled later in the paper with some more details.

In \cite{Di-07},\cite{Di-12}, T.R. Ding proposed a variant of the Poincar\'{e}-Birkhoff theorem, by introducing the concept of ``bend-twist map''. Given
a continuous map $\Psi: A \to \Psi(A)\subseteq {\mathbb R}^2\setminus\{0\},$ which admits a lifting $\tilde{\Psi}$ as in \eqref{eq-0.1}, we define
$$\Upsilon(\theta,\rho):= \mathcal{S}(\theta,\rho) - \rho.$$
We call $\Psi$ a \textit{bend-twist map} if it $\Psi$ satisfies the twist condition
and $\Upsilon$ changes its sign on a non-contractible Jordan closed curve $\Gamma$ contained in
the set of points in the interior of $A$ where $\mathcal{J} = j.$ The original treatment was given in \cite{Di-07} for analytic maps. There are extensions
to continuous maps as well \cite{PaZa-11}, \cite{PaZa-13}. Clearly, the bend-twist map condition is difficult to check in practice, due to the lack
of information about the curve $\Gamma$ (which, in the non-analytic case, may not even be a curve). For this reason,
one can rely on the following corollary \cite[Corollary 7.3]{Di-07} which also follows from the Poincar\'{e}-Miranda theorem
(as observed in \cite{PaZa-11}).

\begin{theorem}\label{pb-03}
Let $\Psi: A= A[r,R]\to \Psi(A)\subseteq {\mathbb R}^2\setminus\{0\}$ be a continuous map such that the twist condition
$(PB2)$ holds. Suppose that there are two disjoint arcs $\alpha, \beta$ contained in $A,$ connecting the inner with the outer boundary of the annulus and such that
\begin{description}
\item[$(BT1)\;$] $\Upsilon > 0$ on $\alpha$ and $\Upsilon < 0$ on  $\beta.$
\end{description}
Then $\Psi$ has at least two fixed points $z_1, z_2$ in the interior of $A$ and $\mathcal{J}(\theta,\rho)=j$ for $\Pi(\theta,\rho) = z_i\,.$
\end{theorem}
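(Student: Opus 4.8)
The plan is to recast the problem as the search for zeros of the planar field
$$F(\theta,\rho) := \bigl(\mathcal{J}(\theta,\rho) - j,\; \Upsilon(\theta,\rho)\bigr),$$
and to locate such zeros by the Poincar\'e--Miranda theorem. Since the image of $\Psi$ avoids the origin, the functions $\mathcal{J},\mathcal{S}$ (hence $\Upsilon=\mathcal{S}-\rho$) are well defined and continuous, and being $2\pi$-periodic in $\theta$ they descend to single-valued continuous functions on $A$. A point $z=\Pi(\theta,\rho)$ in the interior of $A$ is a fixed point of $\Psi$ with $\mathcal{J}=j$ exactly when $\mathcal{S}(\theta,\rho)=\rho$ and $\mathcal{J}(\theta,\rho)=j$, i.e. when $(\theta,\rho)$ is a zero of $F$; note that $j\in\mathbb{Z}$ guarantees that $\tilde{\Psi}(\theta,\rho)=(\theta+2\pi j,\rho)$ projects to $\Psi(z)=z$. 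Up to relabelling (the opposite case being symmetric), the twist condition $(PB2)$ may be read as $\mathcal{J}-j<0$ on the inner boundary $C_r$ and $\mathcal{J}-j>0$ on the outer boundary $C_R$, each sign being constant by continuity in $\theta$.

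The next step is geometric. The two disjoint arcs $\alpha,\beta$, each joining $C_r$ to $C_R$, split the annulus into two open topological rectangles $A_1,A_2$. Consider $\overline{A_1}$: its boundary is made up of four sides, namely the arc $\alpha$, an arc of $C_r$, the arc $\beta$, and an arc of $C_R$. On these sides the two components of $F$ realize precisely the Poincar\'e--Miranda sign pattern: by $(BT1)$ we have $\Upsilon>0$ on $\alpha$ and $\Upsilon<0$ on $\beta$, while by the twist condition $\mathcal{J}-j<0$ on the inner arc and $\mathcal{J}-j>0$ on the outer arc.

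I would then fix a homeomorphism $h:[0,1]^2\to\overline{A_1}$ carrying the two vertical sides of the square onto $\alpha$ and $\beta$ and the two horizontal sides onto the inner and outer boundary arcs, and apply the Poincar\'e--Miranda theorem to $G:=(\Upsilon\circ h,\,(\mathcal{J}-j)\circ h)$. Its first component changes sign on the vertical faces and its second component on the horizontal faces, so $G$ vanishes at some point of $[0,1]^2$, producing a zero $(\theta_1,\rho_1)$ of $F$ in $\overline{A_1}$. Since $\Upsilon\neq0$ on $\alpha\cup\beta$ and $\mathcal{J}-j\neq0$ on $C_r\cup C_R$, this zero lies in the open region $A_1$, hence in the interior of $A$, and yields a fixed point $z_1$ with $\mathcal{J}=j$. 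The region $\overline{A_2}$ has the same four boundary arcs with the same sign configuration, so the identical argument produces a second zero in $A_2$ and a fixed point $z_2$; as $A_1\cap A_2=\emptyset$, the points $z_1,z_2$ are distinct.

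I expect the only genuine difficulty to be topological rather than analytic. One must justify that two disjoint arcs joining the boundary circles separate the annulus into exactly two Jordan (rectangular) regions, and that each region admits a homeomorphism onto the square sending the four marked sides to the four faces in the order dictated by the sign pattern; this is the point where a Sch\"onflies-type argument, or equivalently the ``stretching along the paths'' formalism of \cite{PaZa-11}, must be invoked. Once the generalized rectangle is in place the sign hypotheses are exactly those of Poincar\'e--Miranda, and the existence, interior location, and distinctness of the two fixed points follow immediately.
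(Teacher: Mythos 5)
Your proposal is correct and takes essentially the same route as the paper, which does not prove Theorem \ref{pb-03} in detail but derives it precisely from the Poincar\'{e}--Miranda theorem (as observed in \cite{PaZa-11}): the arcs $\alpha,\beta$ cut the annulus into two topological rectangles, and on each one the pair $(\mathcal{J}-j,\Upsilon)$ exhibits the Miranda sign pattern, with $(PB2)$ forcing $\mathcal{J}-j$ to have constant opposite signs on the two boundary circles. The Sch\"onflies-type decomposition you flag is indeed the only delicate point, and it is exactly what the cited reference \cite{PaZa-11} supplies.
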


A simple variant of the above theorem considers $2n$ pairwise disjoint simple arcs $\alpha_i$ and $\beta_i$ (for $i=1,\dots,n$) contained in $A$
and connecting the inner with the outer boundary. We label these arcs in cyclic order so that each $\beta_i$ is between $\alpha_i$ and
$\alpha_{i+1}$ and each $\alpha_i$ is between $\beta_{i-1}$ and $\beta_{i}$ (with $\alpha_{n+1} =\alpha_{1}$ and $\beta_{0} = \beta_{n}$)
and suppose that
\begin{description}
\item[$(BTn)\;$] $\Upsilon > 0$ on $\alpha_i$ and $\Upsilon < 0$ on $\beta_i\,,$ for all $i=1,\dots,n.$
\end{description}
Then $\Psi$ has at least $2n$ fixed points $z_i$ in the interior of $A$ and $\mathcal{J}(\theta,\rho)=j$ for $\Pi(\theta,\rho) = z_i\,.$
These results also apply in the case of a topological annulus (namely, a compact planar set homeomorphic to $A$)
and do not require that $\Psi$ is area-preserving and also the assumption of $\Psi$ being a homeomorphism is not required, as continuity is enough.
Moreover, since the fixed points are obtained in regions with
index $\pm 1,$ the results are robust with respect to small (continuous) perturbations of the map $\Psi.$

A special case in which condition $(BT1)$ holds is when $\Psi(\alpha) \in D(r)$ and $\Psi(\beta) \in {\mathbb R}^2 \setminus D[R],$
namely, the annulus $A,$ under the action of the map $\Psi,$ is not only twisted, but also strongly stretched, in the sense that
there is a portion of the annulus around the curve $\alpha$ which is pulled inward near the origin inside the disc $D(r)$,
while there is a portion of the annulus around the curve $\beta$ which is pushed outside the disc $D[R].$ This special situation
where a strong bend and twist occur is reminiscent of the geometry of the \textit{Smale horseshoe maps} \cite{Sm-67},\cite{Mo-73} and, indeed, we
will show how to enter in a variant of the theory of \textit{topological horseshoes} in the sense of Kennedy and Yorke \cite{KY-01}.
To this aim, we recall a few definitions which are useful for the present setting.
By a \textit{topological rectangle} we mean a subset ${\mathcal R}$ of the plane which is
homeomorphic to the unit square. Given an arbitrary topological rectangle ${\mathcal R}$ we can define an orientation, by selecting two
disjoint compact arcs on its boundary. The union of these arcs is denoted by ${\mathcal R}^-$ and the pair
${\widehat{\mathcal R}}:= ({\mathcal R},{\mathcal R}^-)$ is called an \textit{oriented rectangle}. Usually the two components of
${\mathcal R}^-$ are labelled as the left and the right sides of ${\widehat{\mathcal R}}$. Given two oriented rectangles $\widehat{{\mathcal A}}$,
$\widehat{{\mathcal B}}$, a continuous map $\Psi$ and a compact set $H\subseteq \text{dom}(\Psi)\cap {\mathcal A},$ the notation
$$(H,\Psi): \widehat{{\mathcal A}}\sap \widehat{{\mathcal B}}$$
means that the following ``stretching along the paths'' (SAP)
property is satisfied: \textit{any path $\gamma,$ contained
in ${\mathcal A}$ and joining the opposite sides of ${\mathcal A}^-,$ contains a sub-path $\sigma$ in $H$ such that the image of $\sigma$
through $\Psi$ is a path contained in ${\mathcal B}$ which connects the opposite sides of ${\mathcal B}^-$.}
We also write $\Psi: \widehat{{\mathcal A}}\sap \widehat{{\mathcal B}}$ when $H = {\mathcal A}.$
By a path $\gamma$ we mean a continuous map defined on a compact interval. When, loosely speaking, we say that a path is contained in a given set
we actually refer to its image $\bar{\gamma}.$ Sometimes it will be useful to consider a relation of the form
$$\Psi:\widehat{{\mathcal A}}\sap^{k} \widehat{{\mathcal B}},$$
for $k\geq 2$ a positive integer, which means that there are at least $k$ compact subsets $H_{1}\,,\dots,H_{k}$ of ${\mathcal A}$
such that $(H_{i},\Psi): \widehat{{\mathcal A}}\sap \widehat{{\mathcal B}}$ for all $i=1,\dots,k.$ From the results in
\cite{PaZa-04b}, \cite{PaZa-04a} we have that $\Psi$ has a fixed point in $H$ whenever
$(H,\Psi): \widehat{{\mathcal R}}\sap \widehat{{\mathcal R}}$. If for a rectangle ${\mathcal R}$ we have that
$\Psi:\widehat{{\mathcal A}}\sap^{k} \widehat{{\mathcal B}},$ for $k\geq 2$, then $\Psi$ has at least $k$ fixed points in ${\mathcal R}.$
In this latter situation, one can also prove the presence of chaotic-like dynamics of coin-tossing type
(this will be briefly discussed later).

The aim of this paper is to analyze, under these premises, a simple example of planar system with periodic coefficients of the form
$$
\begin{cases}
x'=h(y)\\
y'=-q(t)g(x),
\end{cases}
$$
including the second order scalar equation of Duffing type
$$x'' + q(t) g(x) = 0.$$
The prototypical nonlinearity we consider is a function which changes sign at zero and is bounded only on one-side, such as
$g(x) = -1 + \exp(x).$
We do not assume that the weight function $q(t)$ is of constant sign, but, for simplicity,
we suppose that $q(\cdot)$ has a positive hump followed by a negative one.
We prove the presence of periodic solutions coming in pairs
(Theorem \ref{th-A} in Section \ref{section-2}, following the Poincar\'{e}-Birkhoff theorem) or coming in quadruplets
(Theorem \ref{th-B} in Section \ref{section-2}, following bend-twist maps and SAP techniques), the latter
depending on the intensity of the negative part of $q(\cdot).$ To this purpose, we shall express the weight function as
\begin{displaymath}
q(t)= a_{\lambda,\mu}(t):=\lambda a^+(t)-\mu a^-(t),\qquad\lambda, \mu > 0,
\end{displaymath}
being $a(\cdot)$ a periodic sign changing function.

The plan of the paper is the following. In Section \ref{section-2} we present our main results (Theorem \ref{th-A} and Theorem \ref{th-B})
for the existence and multiplicity of periodic solutions. In Section \ref{section-3}, we provide simplified proofs in the special case of a stepwise weight function:
this allows us to highlight the geometric structure underlying the theorems.
These proofs can considered as preparatory to the general ones given in Section \ref{section-4}. Another advantage of considering this
particular framework lies on the fact that a stepwise weight produces a switched system made by two autonomous equations and therefore, in this case,
some threshold constants for $\lambda$ and $\mu$ can be explicitly computed. In Section \ref{section-5}
we show how to extend our main results to the case of subharmonic solutions. Eventually, Section \ref{section-6} concludes the paper with
a list of some possible applications.

\section{Statement of the main results}\label{section-2}

This work deals with the existence and multiplicity of periodic
solutions to sign-indefinite nonlinear first order planar systems
of the form
\begin{equation}\label{eq-1.1}
\begin{cases}
x'=h(y)\\
y'=-a_{\lambda,\mu}(t)g(x).
\end{cases}
\end{equation}
Throughout the article, we will suppose that $h,g: {\mathbb R}
\to {\mathbb R}$ are locally Lipschitz continuous functions
satisfying the following assumptions:
\begin{equation*}
\begin{array}{lll}
& h(0)=0, \; h(y)y>0\textrm{ for all }y\neq0 \\
& g(0)=0, \; g(x)x>0\textrm{ for all }x\neq0\\
& \displaystyle{h_{0}:= \liminf_{|y|\to 0}\frac{h(y)}{y}>0,\quad g_{0}:= \liminf_{|x|\to 0}\frac{g(x)}{x} > 0.}
\end{array}
\leqno{(C_{0})}
\end{equation*}

We will also suppose that \emph{at least one} of the following conditions holds
$$
h \; \text{is bounded on }\; {\mathbb R}^{-},
\leqno{(h_{-})}
$$
$$
h \; \text{is bounded on }\; {\mathbb R}^{+},
\leqno{(h_{+})}
$$
$$
g \; \text{is bounded on }\; {\mathbb R}^{-},
\leqno{(g_{-})}
$$
$$
g \; \text{is bounded on }\; {\mathbb R}^{+}.
\leqno{(g_{+})}
$$
We also set
$$\mathcal{G}(x):= \int_{0}^{x} g(\xi)d\xi,\quad \mathcal{H}(y):= \int_{0}^{y} h(\xi)d\xi.$$
Concerning the weight function $a_{\lambda,\mu}(t)\,,$ it is defined starting from a $T$-periodic sign-changing map
$a: {\mathbb R} \to {\mathbb R}$ by setting
\begin{displaymath}
a_{\lambda,\mu}(t)=\lambda a^+(t)-\mu a^-(t),\qquad\lambda, \mu > 0.
\end{displaymath}
As usual, $a^+:= (a + |a|)/2$ is the positive part of $a(\cdot)$ and $a^-:= a^+ - a.$ Given an interval $I,$ by $a \succ 0$ on $I$
we mean that $a(t)\geq 0$ for almost every $t\in I$ with $a>0$ on a subset of $I$ of positive measure. Similarly, $a \prec 0$ on $I$ means that $-a \succ 0$ on $I.$
For sake of simplicity, we suppose that in a period the weight function $a(t)$ has one positive hump followed by one negative hump.
The case in which $a(t)$ has several (but finite) changes of sign in a period could be dealt with as well, but will be not treated here.
Therefore, we suppose that there are $t_{0}$ and $T_1\in \,]0,T[$
such that
$$
a \succ 0 \quad \text{ on } \; [t_{0},t_{0}+T_1] \quad \text{and } \quad a \prec 0 \quad \text{ on } \; [t_{0}+T_1,t_{0}+T].
\leqno{(a^*)}$$
Actually, due to the $T$-periodicity of the weight function, it will be not restrictive to take
$t_{0} = 0$ and we will assume it for the rest of the paper.
Concerning the regularity of the weight function, we suppose that $a(\cdot)$ is continuous (or piecewise-continuous), although from the proofs
it will be clear that all the results
are still valid for $a^+\in L^{\infty}([0,T_1])$ and $a^-\in L^1([T_1,T]).$

The assumptions on $h$ and $g$ allow to consider a broad class of planar systems.
We will provide a list of specific applications in Section \ref{section-6}.
For the reader's convenience, we observe that, for the results to come,
we have in mind the model given by the scalar second order equation
\begin{equation}\label{eq-1.2}
x''+a_{\lambda,\mu}(t)(e^x-1)=0,
\end{equation}
which can be equivalently written as system \eqref{eq-1.1} with $h(y) = y$ and $g(x) = - 1 +\exp x$ (see Section \ref{section-3}).

We denote by $\Phi$ the Poincar\'{e} map associated with system \eqref{eq-1.1}. Recall that
$$\Phi(z) = \Phi_{0}^{T}(z):= (x(T;0,z),y(T;0,z)),$$
where $(x(\cdot\,;s,z),y(\cdot\,;s,z))$ is the solution of \eqref{eq-1.1} satisfying the initial condition $z=(x(s),y(s)).$
Since system \eqref{eq-1.1} has a Hamiltonian structure of the form
\begin{equation}\label{eq-H}
\begin{cases}
x'= \dfrac{\partial {\mathbf H}}{\partial y}(t,x,y)\\
y'= - \dfrac{\partial {\mathbf H}}{\partial x}(t,x,y)
\end{cases}
\end{equation}
for
${\mathbf H}(t,x,y) = a_{\lambda,\mu}(t)\mathcal{G}(x) + \mathcal{H}(y),$
the associated Poincar\'{e} map
is an area-pre\-serv\-ing homeomorphism, defined on a open set
$\Omega:=\text{dom}\Phi\subseteq {\mathbb R}^2,$
with $(0,0)\in \Omega.$ Thus a possible method to prove
the existence (and multiplicity) of $T$-periodic solutions can be based on the Poincar\'{e}-Birkhoff ``twist'' fixed point theorem.
A typical way to apply this result is to find a suitable annulus around the origin with radii $0 < r_0 < R_0$ such that for some $\mathfrak{a}
< \mathfrak{b}$
the twist condition
\begin{equation*}
\begin{cases}
\text{rot}_{z}(T) > \mathfrak{b},\quad\forall\, z \; \text{with }\, ||z||= r_0 \\
\text{rot}_{z}(T) < \mathfrak{a},\quad\forall\, z \; \text{with }\, ||z||= R_0
\end{cases}
\leqno{(TC)}
\end{equation*}
holds, where $\text{rot}_{z}(T)$ is the rotation number on the interval $[0,T]$
associated with the initial point $z\in {\mathbb R}^2 \setminus\{(0,0)\}.$ We recall that a possible definition of $\text{rot}_{z}$ for
equation \eqref{eq-1.1} is given by the integral formula
\begin{equation}\label{eq-1.3}
\text{rot}_{z}(t_1,t_2):= \frac{1}{2\pi} \int_{t_1}^{t_2} \frac{y(t) h(y(t)) + a_{\lambda,\mu}(t)x(t)g(x(t))}{x^2(t) + y^2(t)} dt,
\end{equation}
where $(x(t),y(t))$ is the solution of \eqref{eq-1.1} with
$(x(t_1),y(t_1)) = z\not= (0,0).$
For simplicity in the notation, we set
$$\text{rot}_{z}(T):= \text{rot}_{z}(0,T).$$
Notice that, due to the assumptions $h(s)s > 0$ and $g(s)s > 0$ for $s\not=0,$ it is convenient to use a formula like \eqref{eq-1.3}
in which the angular displacement is positive when the rotations around the origin are performed in the clockwise sense.

Under these assumptions, the Poincar\'{e}-Birkhoff theorem, in the version of \cite[Corollary 2]{Re-97},
guarantees that for each integer $j\in [\mathfrak{a},\mathfrak{b}],$
there exist \textit{at least two} $T$-periodic solutions of
system \eqref{eq-1.1}, having $j$ as associated rotation number.
In virtue of the first condition in $(C_{0}),$
it turns out that these solutions have precisely $2j$ simple transversal crossings with the $y$-axis
in the interval $[0,T[$ (see, for instance, \cite[Theorem 5.1]{BZ-13}, \cite[Theorem A]{MRZ-02}). Equivalently, for such a periodic solution
$(x(t),y(t))$, we have that $x$ has precisely $2j$ simple zeros
in the interval $[0,T[\,.$

If we look for $mT$-periodic solutions, we just consider the $m$-th iterate of the Poincar\'{e} map
$$\Phi^{(m)} = \Phi_{0}^{mT}$$
and assume the twist condition
\begin{equation*}
\begin{cases}
\text{rot}_{z}(mT) > \mathfrak{b},\quad\forall\, z \; \text{with }\, ||z||= r_0 \\
\text{rot}_{z}(mT) < \mathfrak{a},\quad\forall\, z \; \text{with }\, ||z||= R_0\,.
\end{cases}
\leqno{(TC_{m})}
\end{equation*}
From this point of view, the Poincar\'{e}-Birkhoff is a powerful tool to prove the existence of subharmonic solutions having $mT$ as minimal period.
Indeed, if we have $(TC_{m})$ satisfied with $\mathfrak{a} \leq j \leq \mathfrak{b},$ then we find $mT$-periodic solutions $(x(t),y(t))$ with
$x$ having exactly $2j$ simple zeros in $[0,mT[\,.$
In addition, if $j$ and $m$ are relatively prime integers, then it follows that these solutions cannot be $\ell T$-periodic for some
$\ell = 1,\dots, m-1.$ In particular, if $(x,y)$ is one of these $mT$-periodic solutions and $j=1$ or $j\geq 2$ is relatively prime with $m$ and
$T$ is the minimal period of the weight function $a_{\lambda,\mu}(t),$
then $mT$ will be the minimal period of $(x,y).$

Clearly, in order to apply this approach,  we need to have the Poincar\'{e} map defined on the closed disc
$D[R_{0}]$ of center the origin and radius $R_{0}$,
that is $D[R_{0}] \subseteq \Omega.$ Unfortunately, in general, the (forward)
global existence of solutions for the initial value problems is not guaranteed.
A classical counterexample can be found in \cite{CU-67} for
the superlinear equation $x'' + q(t) x^{2n+1} = 0$ (with $n\geq 1$),
where, even for a positive weight $q(t),$ the global existence of the solutions may fail. A typical feature of this class of counterexamples
is that solutions presenting a blow-up at some time $\beta^-,$ will make infinitely many winds around the origin as $t\to \beta^-.$
It is possible to overcome these difficulties by prescribing the rotation number for large solutions and using some truncation argument
on the nonlinearity, as shown in \cite{Ha-77},\cite{FS-16}.
In our case, the boundedness assumption at infinity given by one among $(g^{\pm}),((h^{\pm}),$ prevents such highly oscillatory phenomenon and guarantees
the continuability on $[0,T_1].$
The situation is even more complicated in the time intervals
where the weight function is negative \cite{BG-71}, \cite{Bu-76}.
Unless we impose some growth restrictions on the vector field in \eqref{eq-1.1}, for example that it has at most a linear growth at infinity,
in general we cannot prevent blow-up phenomena. Another possibility to avoid blow-up is to assume that both $(h_-)$ and $(h_+)$ hold
(or, alternatively, both $(g_-)$ and $(g_+)$). In fact, in this case, we have $x'$ bounded and hence $x(t)$
bounded in any compact time-interval and thus, from the second equation in \eqref{eq-1.1}  $y'$ is bounded on compact intervals, too.

\smallskip

With these premises, the following result holds.
\begin{theorem}\label{th-A}
Let $g,h: {\mathbb R} \to {\mathbb R}$ be locally Lipschitz continuous functions satisfying $(C_{0})$ and at least one between the four conditions
$(h_{\pm})$ and
$(g_{\pm}).$ Assume, moreover, the global continuability for the solutions of \eqref{eq-1.1}. Then, for each positive
integer $k,$ there exists $\Lambda_k > 0$
such that for each $\lambda > \Lambda_{k}$ and $j = 1,\dots, k, $ the system \eqref{eq-1.1}
has at least two $T$-periodic solutions $(x(t),y(t))$ with $x(t)$ having exactly $2j$-zeros in the interval $[0,T[\,.$
\end{theorem}
\noindent
Notice that in the above result we do not require any condition on the parameter $\mu > 0.$ On the other hand, we have to assume the global
continuability of the solutions, which in general is not guaranteed. Alternatively, we can exploit the fact that the solutions are globally defined
in the interval $[0,T_1]$ where $a\succ 0$ and, instead of asking for the global continuability on $[0,T],$ assume that $\mu$ is small enough.
Quite the opposite, for the next result we do not require the Poincar\'{e} map to be defined on the whole plane,
although now the parameter $\mu$ plays a crucial role and must be large instead.

\begin{theorem}\label{th-B}
Let $g,h: {\mathbb R} \to {\mathbb R}$ be locally Lipschitz continuous functions satisfying $(C_{0})$
and at least one between the four conditions $(h_{\pm})$ and
$(g_{\pm}).$ Then, for each positive
integer $k,$ there exists $\Lambda_k > 0$
such that for each $\lambda > \Lambda_{k}$ there exists $\mu^*= \mu^*(\lambda)$ such that for each $\mu > \mu^*$
and $j = 1,\dots, k, $ the system \eqref{eq-1.1}
has at least four $T$-periodic solutions $(x(t),y(t))$ with $x(t)$ having exactly $2j$-zeros in the interval $[0,T[\,.$
\end{theorem}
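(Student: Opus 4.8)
The plan is to exploit the decomposition of the Poincar\'e map over one period induced by $(a^*)$, writing $\Phi = \Phi_{T_1}^{T}\circ\Phi_{0}^{T_1}$, and to read off the two geometric ingredients of a bend--twist map from the two humps of the weight separately: the \emph{twist} will come from the positive hump $[0,T_1]$ with $\lambda$ large (exactly as in Theorem~\ref{th-A}), while the \emph{bend}, i.e. the change of sign of the radial displacement $\Upsilon(\theta,\rho)=\mathcal{S}(\theta,\rho)-\rho$, will be produced by the negative hump $[T_1,T]$ with $\mu$ large. The final step will be an application of the $(BTn)$ variant of Theorem~\ref{pb-03} with $n=2$, which yields $2n=4$ fixed points in the region where the rotation index equals $j$; by $(C_0)$ each such fixed point corresponds to a $T$-periodic solution whose first component has exactly $2j$ simple zeros in $[0,T[$.

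First I would fix $k$ and construct an annulus $A[r_0,R_0]$ centred at the origin on which the twist holds for every $j\in\{1,\dots,k\}$. Using the integral formula \eqref{eq-1.3}, the positive sign of $h(s)s$ and $g(s)s$, and the lower bounds $h_0,g_0>0$ in $(C_0)$, small-amplitude solutions rotate rapidly on $[0,T_1]$: comparing with the linearized system $x'=h_0 y$, $y'=-\lambda a^+(t)g_0 x$ shows that the rotation number on the inner circle $\|z\|=r_0$ exceeds $k$ once $\lambda>\Lambda_k$. For the outer circle, the one-sided boundedness assumption (one among $(h_{\pm}),(g_{\pm})$) caps the angular speed of large-amplitude solutions, so that $\mathrm{rot}_z(0,T_1)$ can be forced below any prescribed level for $\|z\|=R_0$; this is the standard one-sided sublinear mechanism and is where $\Lambda_k$ is produced. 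It then remains to observe that on the negative hump orbits cannot wind around the origin, so $\Phi_{T_1}^{T}$ contributes only a uniformly bounded amount of rotation; hence the twist measured over the full interval $[0,T]$ still brackets each integer $j\le k$, and $(PB2)$ holds on $A[r_0,R_0]$.

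The new point, and the role of $\mu$, is the bend. On $[T_1,T]$ one has $a^+\equiv 0$, so the system reads $x'=h(y)$, $y'=\mu a^-(t)g(x)$, for which the origin is a saddle: with $g(x)x>0$ and $h(y)y>0$ the field points outward in two opposite sectors and inward along the two complementary ones. For $\mu$ large the flow over the fixed time $[T_1,T]$ becomes strongly expansive along the unstable sectors and strongly contractive along the stable ones. I would use this to select, on the twisted annulus, two disjoint arcs $\alpha_1,\alpha_2$ lying in the expanding sectors, whose images under $\Phi$ are pushed outside $D[R_0]$ (so $\Upsilon>0$ there), and two disjoint arcs $\beta_1,\beta_2$ lying in the contracting sectors, whose images are pulled inside $D(r_0)$ (so $\Upsilon<0$ there); the four arcs alternate cyclically around the annulus precisely because the saddle has two expanding and two contracting sectors, which is exactly the pattern required by $(BTn)$ with $n=2$. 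This is the strong-stretching situation described after Theorem~\ref{pb-03}, and it is here that one fixes $\mu^*=\mu^*(\lambda)$: $\mu$ must be taken large enough, depending on the already-chosen $\lambda$ and on $r_0,R_0$, to guarantee that the four arcs are mapped across $D[R_0]$ and into $D(r_0)$ respectively.

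The main obstacle I anticipate is precisely the analysis on the negative hump, where global continuability is \emph{not} assumed and blow-up may occur (cf. \cite{BG-71,Bu-76,CU-67}). The remedy is that the bend--twist and SAP machinery is local in character: it only needs the dynamics on the four chosen compact arcs, not on the whole disc. Thus I would establish a priori bounds guaranteeing that solutions starting on $\alpha_i$ and $\beta_i$ remain defined on $[T_1,T]$ and realise the claimed stretching, exploiting the one-sided boundedness to control the field along these arcs while $\mu$ drives the expansion and contraction. Once the twist $(PB2)$ and the alternating sign condition $(BTn)$ are both in force for the full-period map $\Phi$, the cited variant of Theorem~\ref{pb-03} delivers four fixed points at rotation index $j$ for each $j=1,\dots,k$, and the zero-counting via $(C_0)$ completes the argument.
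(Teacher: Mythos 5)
Your strategy is precisely the bend--twist route that the paper itself develops in Section \ref{section-3} as the ``intermediate step'' $(\mathbf{II})$, and it founders on exactly the point flagged there: that argument needs the Poincar\'{e} map of the negative-hump system to be \emph{globally defined on the whole annulus}, which is not available under the hypotheses of Theorem \ref{th-B} (no global continuability is assumed, and blow-up on $[T_1,T]$ is a genuine possibility, cf. \cite{BG-71}, \cite{Bu-76}). Your claim that the machinery ``only needs the dynamics on the four chosen compact arcs'' is not correct for the bend--twist theorem: Theorem \ref{pb-03} and its $(BTn)$ variant are proved via the Poincar\'{e}--Miranda theorem applied to the four \emph{two-dimensional} regions into which the arcs $\alpha_i,\beta_i$ divide $A[r_0,R_0]$, so the full-period map $\Phi$ (and the rotation number entering $(PB2)$) must be defined and continuous on those whole regions and on the boundary circles $C_{r_0}$, $C_{R_0}$, not merely on four arcs. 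A priori bounds on solutions emanating from the arcs cannot exclude blow-up at interior points of the regions, so both your twist over the full period (the estimate $\text{rot}_{z}(T_1,T)>-1/2$ requires the solution to exist on all of $[T_1,T]$) and the Miranda argument break down.

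The paper's actual proof (Theorem \ref{th-B1}) is structured to avoid exactly this: the twist is established only for $\Phi_{0}^{T_1}$ (condition $(TC_*)$, where global existence on $[0,T_1]$ is guaranteed by \cite{DiZa-96}), and the four solutions per $j$ are obtained not through bend--twist but through the stretching-along-paths framework, composing $(H,\Phi_{0}^{T_1}):\widehat{{\mathcal P}_{i}}\sap \widehat{{\mathcal M}_{\ell}}$ with $\Phi_{T_1}^{T}:\widehat{{\mathcal M}_{\ell}}\sap \widehat{{\mathcal P}_{i}}$. Blow-up on $[T_1,T]$ is then handled by truncating $h$ and $g$ outside $[-\mathfrak{S}_0,\mathfrak{S}_0]$: the truncated system is globally solvable, the SAP property is verified for $\hat{\Phi}_{T_1}^{T}$, and --- this is the step your proposal has no substitute for --- the stretching property itself produces sub-paths whose trajectories remain in $D[0,\mathfrak{S}_0]$ for all $t\in[T_1,T]$, where the truncated and original systems coincide. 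Note that even if you ran your bend--twist argument on a truncated system, a fixed point delivered by Poincar\'{e}--Miranda comes with no localization of its trajectory during $[T_1,T]$, so you could not conclude that it solves the original system \eqref{eq-1.1}. To repair the proposal you would either have to add the hypothesis that $\Phi_{T_1}^{T}$ is globally defined on the annulus --- recovering the paper's intermediate step, i.e. a strictly weaker theorem --- or switch to a path-stretching argument as the paper does.
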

\noindent
The proofs of Theorem \ref{th-A} and Theorem \ref{th-B} are given in Section \ref{section-4}. In the case of Theorem \ref{th-B}
we will also show how the $2j$ zeros of $x(t)$ are distributed between the intervals $]0,T_1[$ and $]T_1,T[\,.$
Furthermore, we detect the presence of ``complex dynamics'',
in the sense that we can prove the existence of four compact invariant sets where the Poincar\'{e} map $\Phi$ is semi-conjugate
to the Bernoulli shift automorphism on $\ell =\ell_{\mu}\geq 2$ symbols.
A particular feature of Theorem \ref{th-B} lies in the fact that such result is robust with respect to small perturbations. In particular, it applies to
a perturbed Hamiltonian system of the form
\begin{equation}\label{eq-He}
\begin{cases}
x'= \dfrac{\partial {\mathbf H}}{\partial y}(t,x,y) + F_1(t,x,y,\varepsilon)\\
y'= - \dfrac{\partial {\mathbf H}}{\partial x}(t,x,y) +  F_2(t,x,y,\varepsilon)
\end{cases}
\end{equation}
with $F_1, F_2 \to 0$ as $\varepsilon \to 0$, uniformly in $t$, and for $(x,y)$ on compact sets.
Observe that
system \eqref{eq-He} has not necessarily a Hamiltonian structure and therefore it is no more guaranteed that the associated Poincar\'{e} map
is area-preserving.

In Section \ref{section-5}, versions of Theorem \ref{th-A} and Theorem \ref{th-B} for subharmonic solutions are given.
Moreover, in the setting of Theorem \ref{th-B}, we show that any $m$-periodic sequence on $\ell$ symbols can
be obtained by a $m$-periodic point of $\Phi$ in each of such compact invariant sets (see Theorem \ref{th-Bsub}).
Figure \ref{fig01} gives evidence of an abundance of subharmonic solutions to a system in the class \eqref{eq-1.1}.

\begin{figure}[htb]
\centering
\includegraphics[width=0.7\textwidth]{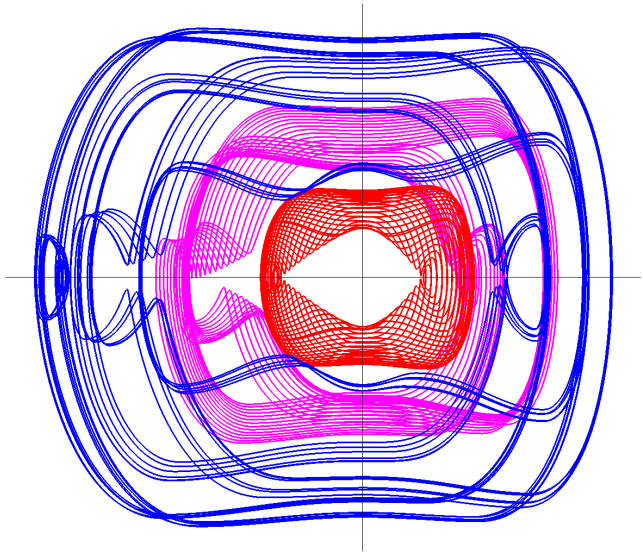}
\caption{A numerical simulation for system \eqref{eq-1.1}. The example is obtained for
$h(y) = y,$ $g(x)= - 1 + \exp x$ and
the weight function $a_{\lambda,\mu}$ where $a(t) = \sin(2\pi t)$,  $\lambda = 10$ and $\mu = 2.$
The phase-plane portrait is shown for the initial points $(0.4,0),$ $(0.1,0.2)$ and $(0.5,0).$}
\label{fig01}
\end{figure}

\section{The stepwise weight: a geometric framework}\label{section-3}

We focus on the particular case in which $h(y)=y$ and $g:{\mathbb R}\to {\mathbb R}$ is a locally Lipschitz continuous function such that
$$g(0) = 0 < g_{0}, \; \; g(x)x > 0\; \;\forall\, x\not=0, \;\text{with }\, g \; \text{ bounded on } \; {\mathbb R}^-,$$
so that $(C_{0})$ is satisfied along with $(g_{-}).$ A possible choice could be $g(x)= e^x-1$, but we stress that we do not ask for $g$ to be unbounded on
${\mathbb R}^+$.

The second order ordinary differential equation originating from \eqref{eq-1.1} reads
\begin{equation}\label{eq-2.1}
x''+a_{\lambda,\mu}(t)g(x)=0.
\end{equation}
In order to illustrate quantitatively the main ideas of the proof we choose a stepwise $T$-periodic function $a(\cdot)$ which takes value $a(t) = 1$
on an interval of length $T_1$ and value $a(t) = -1$ on a subsequent interval of length $T_2= T-T_1$, so that
$a_{\lambda,\mu}$ is defined as
\begin{equation}\label{eq-2.1b}
a_{\lambda,\mu}(t)=\left\{\begin{array}{ll}\lambda & \text{for } \;t\in[0,T_1[\\-\mu & \text{for } \; t\in [T_1,T_1+T_2[\end{array}\right.
\quad T_1 + T_2 = T.
\end{equation}
With this particular choice of $a(t),$ the planar system associated with \eqref{eq-2.1} turns out to be a periodic switched system \cite{Ba-14}.
Such kind of systems are widely studied in control theory.

For our analysis we first take into account the interval of positivity, where \eqref{eq-1.1} becomes
\begin{equation}\label{eq-2.2}
\begin{cases}
x'=y\\
y'=-\lambda g(x).
\end{cases}
\end{equation}
For this system the origin is a local center, which is global if
$\mathcal{G}(x)\to +\infty$ as $x\to \pm \infty$, where $\mathcal{G}(x)$ is the primitive of $g(x)$ such that $\mathcal{G}(0)=0$. The associated energy function is given by
\begin{displaymath}
E_1(x,y):=\dfrac{1}{2}y^2 + \lambda \mathcal{G}(x).
\end{displaymath}
For any constant $c$
with
$0 < c < \min\{\mathcal{G}(-\infty),\mathcal{G}(+\infty)\},$
the level line of \eqref{eq-2.2} of positive energy $\lambda c$ is a closed orbit $\Gamma$ which intersects the $x$-axis in the phase-plane
at two points $(x_-,0)$ and $(x_+,0)$ such that
$$x_- < 0 < x_+, \quad\text{and } \; c:= \mathcal{G}(x_-) = \mathcal{G}(x_+) > 0.$$
We call $\tau(c)$ the period of $\Gamma$, which is given by
$$\tau(c) = \tau^+(c) + \tau^-(c),$$
where
\begin{displaymath}
\tau^+(c):=\sqrt{\frac{2}{\lambda}}\int_{0}^{x_+}\dfrac{d\xi}{\sqrt{(c - \mathcal{G}(\xi))}}, \quad
\tau^-(c):=\sqrt{\frac{2}{\lambda}}\int_{x_-}^{0}\dfrac{d\xi}{\sqrt{(c - \mathcal{G}(\xi))}}
\end{displaymath}
The maps $c\mapsto \tau^{\pm}(c)$ are continuous. To proceed with our discussion, we suppose that $\mathcal{G}(-\infty) \leq \mathcal{G}(+\infty)$
(the other situation can be treated symmetrically). Then $\tau^-(c) \to +\infty$ as $c\to \mathcal{G}(-\infty)$
(this follows from the fact that $g(x)/x$ goes to zero as $x\to -\infty$, see \cite{Op-61}).
We can couple this result with an estimate near the origin
$$\displaystyle{\limsup_{c\to0^+}\tau(c)\leq{2\pi}/{\sqrt{\lambda g_{0}}}}$$
which follows from classical and elementary arguments.
\begin{proposition}\label{pr-2.1}
For each $\lambda > 0,$ the time-mapping $\tau$ associated with system \eqref{eq-2.2} is continuous and its range includes the interval
$]{2\pi}/{\sqrt{\lambda g_{0}}},+\infty[\,.$
\end{proposition}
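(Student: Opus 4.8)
The plan is to combine the continuity of the time map with its two boundary behaviours (one near the center, one near the outer edge of the period annulus) and then to close the argument with the intermediate value theorem. Since $\lambda > 0$ is fixed throughout, every estimate below is at fixed $\lambda$, and the threshold $2\pi/\sqrt{\lambda g_0}$ is a genuine constant.

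First I would record continuity. On the interval of admissible energies $c\in\,]0,\mathcal{G}(-\infty)[\,$ (allowing $\mathcal{G}(-\infty)=+\infty$), the map $\tau=\tau^++\tau^-$ is continuous, which is immediate from the already-noted continuity of the two partial time maps $c\mapsto\tau^{\pm}(c)$. Concretely, each $\tau^{\pm}$ is an improper integral that is singular only at the turning point $x_{\pm}$, where $c-\mathcal{G}(x)$ vanishes to first order like $g(x_{\pm})(x_{\pm}-x)$ with $g(x_{\pm})\neq 0$; the resulting $1/\sqrt{\cdot}$ singularity is integrable and is removed by a standard change of variable, yielding continuous dependence on $c$.

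Next I would isolate the two one-sided limits. On the inner side, the recalled estimate $\limsup_{c\to 0^+}\tau(c)\leq 2\pi/\sqrt{\lambda g_0}$ reads quantitatively as: for every $\delta>0$ there is $c_\delta>0$ with $\tau(c)<2\pi/\sqrt{\lambda g_0}+\delta$ for all $0<c<c_\delta$. On the outer side, because $g$ is bounded on ${\mathbb R}^-$ we have $g(x)/x\to 0$ as $x\to-\infty$, whence $\tau^-(c)\to+\infty$ as $c\to\mathcal{G}(-\infty)^-$; since $\tau^+(c)>0$, this forces $\tau(c)\geq\tau^-(c)\to+\infty$. Finally, fixing an arbitrary target $V>2\pi/\sqrt{\lambda g_0}$ and setting $\delta:=V-2\pi/\sqrt{\lambda g_0}>0$, the inner estimate supplies some $c_1$ with $\tau(c_1)<V$, while the outer estimate supplies some $c_2$, which we may take with $c_1<c_2<\mathcal{G}(-\infty)$, with $\tau(c_2)>V$. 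Continuity of $\tau$ on $[c_1,c_2]$ then gives $c^*\in\,]c_1,c_2[$ with $\tau(c^*)=V$. As $V$ was arbitrary, the range of $\tau$ contains all of $]2\pi/\sqrt{\lambda g_0},+\infty[\,$.

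The routine ingredients here—continuity of the time map and the two limiting values—are standard time-map estimates (and are already stated in the preamble), so they are not the real obstacle. The only delicate point is that the inner information is purely one-sided, a bound on the $\limsup$ rather than an exact limit: it guarantees only that $\tau$ can be pushed below any value strictly larger than the threshold, so the left endpoint $2\pi/\sqrt{\lambda g_0}$ itself need not be attained. This is exactly why the conclusion asserts the \emph{open} half-line, and it is the aspect one must phrase carefully when invoking the intermediate value theorem.
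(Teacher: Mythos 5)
Your proposal is correct and takes essentially the same route as the paper, whose proof is implicit in the discussion preceding the statement: continuity of $c\mapsto\tau^{\pm}(c)$, the inner estimate $\limsup_{c\to0^+}\tau(c)\leq 2\pi/\sqrt{\lambda g_{0}}$, the outer divergence $\tau^-(c)\to+\infty$ as $c\to\mathcal{G}(-\infty)$ (from $g(x)/x\to 0$ as $x\to-\infty$, as in Opial's result cited there), and the intermediate value theorem. Your closing observation that the one-sided $\limsup$ is precisely why only the open half-line $\left]2\pi/\sqrt{\lambda g_{0}},+\infty\right[$ is claimed is exactly the point the statement encodes.
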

Showing the monotonicity of the whole time-map $\tau(c)$ is, in general, a difficult task. However, for the exponential case
$g(x) = e^x -1$ this has been proved
in \cite{CW-86} (see also \cite{Ch-87}).

On the interval of negativity of $a_{\lambda,\mu}(t),$ system \eqref{eq-1.1} becomes
\begin{equation}\label{eq-2.3}
\begin{cases}
x'=y\\
y'=\mu g(x),
\end{cases}
\end{equation}
with $g(x)$ as above. For this system the origin is a global saddle with unbounded stable and  unstable manifolds contained in the zero level set of the
energy
\begin{displaymath}
E_2(x,y):=\dfrac{1}{2}y^2-\mu \mathcal{G}(x).
\end{displaymath}
In the following, given a point $P\in {\mathbb R}^2,$ we denote by $\gamma^{\pm}(P)$ and $\gamma(P)$ respectively
the positive/negative semiorbit and the orbit for the point $P$
with respect to the (local) dynamical system associated with \eqref{eq-2.3}.

If we start from a point $(0,y_0)$ with $y_0 > 0$ we can explicitly evaluate the blow-up time as follows. First of all we compute the
time needed to reach the level $x=\kappa > 0$ along the trajectory of \eqref{eq-2.3},
which is the curve of fixed energy $E_2(x,y) = E_2(0,y_0)$ with $y>0$.
Equivalently, we have
\begin{displaymath}
y=x'=\sqrt{y_0^2+2\mu \mathcal{G}(x)}
\end{displaymath}
from which
\begin{displaymath}
t=\int_0^{\kappa}\dfrac{dx}{\sqrt{y_0^2+2\mu \mathcal{G}(x)}}
\end{displaymath}
follows. Therefore, the blow-up time is given by
\begin{displaymath}
T(y_0)=\int_0^{+\infty}\dfrac{dx}{\sqrt{y_0^2+2\mu \mathcal{G}(x)}}\,.
\end{displaymath}
Standard theory guarantees that if the Keller - Osserman condition
\begin{equation}\label{intG}
\int_{}^{+\infty}\dfrac{dx}{\sqrt{\mathcal{G}(x)}}<+\infty
\end{equation}
holds, then the blow-up time is always finite and $T(y_0)\searrow0$ for $y_0\nearrow+\infty$.
On the other hand, $T(y_0)\nearrow +\infty$ for $y_0\searrow 0^+$.
Hence there exists $\bar{y}>0$ such that $T(y_0)>T_2$ for $y_0\in \,]0,\bar{y}[$ and hence there is no blow-up in $[T_1,T]$.

If we start with null derivative, i.e. from a point $(x_0,0)$, then similar calculations return
\begin{displaymath}
t=\int_{x_0}^{\kappa}\dfrac{dx}{\sqrt{2\mu(\mathcal{G}(x)-\mathcal{G}(x_0))}}
\end{displaymath}
and, since $\mathcal{G}(x)-\mathcal{G}(x_0)\sim g(x_0)(x-x_0)$ for $|x-x_0|\ll1$, the improper integral at $x_0$ is finite.
Therefore, the blow-up time is given by
\begin{displaymath}
T(x_0)=\int_{x_0}^{+\infty}\dfrac{dx}{\sqrt{2\mu(\mathcal{G}(x)-\mathcal{G}(x_0))}}\,.
\end{displaymath}
If \eqref{intG} is satisfied, then the blow-up time is always finite.
Moreover, $T(x_0)\to +\infty$ as $x_0\to 0^+.$
A similar but more refined result can be found in \cite[Lemma 3]{PaZa-00}.

Now we describe how to obtain Theorem \ref{th-A} and Theorem \ref{th-B} for system
\begin{equation}\label{eq-2.5}
\begin{cases}
x'=y\\
y'=-a_{\lambda,\mu}(t)g(x)
\end{cases}
\end{equation}
in the special case of a $T$-periodic stepwise function as in \eqref{eq-2.1b}. As we already observed,
equation \eqref{eq-2.5} is a periodic switched system and therefore its associated Poincar\'{e} map $\Phi$ on the interval $[0,T]$
splits as
$$\Phi = \Phi_2\circ\Phi_1\,$$
where $\Phi_1$ is the Poincar\'{e} map on the interval $[0,T_1]$ associated with system \eqref{eq-2.2} and
$\Phi_2$ is the Poincar\'{e} map on the interval $[0,T_2]$ associated with system \eqref{eq-2.3}.

\noindent
$(\mathbf{I})\;$ \textit{Proof of Theorem \ref{th-A} for the stepwise weight.}
We start by selecting a closed orbit $\Gamma^0$ near the origin of \eqref{eq-2.2} at a level energy $\lambda c_0$ and fix $\lambda$ sufficiently large,
say $\lambda > \Lambda_{k}$, so that in view of Proposition \ref{pr-2.1}
\begin{equation}\label{eq-2.7}
\tau(c_0) < \frac{T_1}{k+1}.
\end{equation}
Next, for the given (fixed) $\lambda$, we consider a second energy level $\lambda c_1$ with  $c_1 > c_0$ such that
\begin{equation}\label{eq-2.8}
\tau^-(c_1) > 2 T_2
\end{equation}
and denote by $\Gamma^1$ the corresponding closed orbit.
Let also
$${\mathcal A}:= \{(x,y): 2\lambda c_0 \leq y^2 + 2\lambda \mathcal{G}(x) \leq 2\lambda c_1\}$$
be the planar annular region enclosed between $\Gamma^0$ and $\Gamma^1.$
If we assume that the Poincar\'{e} map $\Phi_2$
is defined on ${\mathcal A},$ then the complete Poincar\'{e} map $\Phi$ associated with system \eqref{eq-2.5}
is a well defined area-preserving homeomorphism of the annulus ${\mathcal A}$ onto its image $\Phi({\mathcal A}) = \Phi_2({\mathcal A})$.
In fact the annulus is invariant under the action of $\Phi_1$.

During the time interval $[0,T_1],$ each point $z\in \Gamma^0$ performs $\lfloor T_1/\tau(c_0)\rfloor$
complete turns around the origin in the clockwise sense. This implies that
$$\text{rot}_z(0,T_1) \geq \left\lfloor \frac{T_1}{\tau(c_0)}\right\rfloor, \quad\forall\, z\in \Gamma^0\,.$$
On the other hand, from \cite[Lemma 3.1]{BZ-13} we know that
$$\text{rot}_z(T_1,T) =  \text{rot}_z(0,T_2)  > - \frac{1}{2}\,, \quad\forall\, z\not= (0,0)\,.$$
We conclude that
$$\text{rot}_z(T) > k, \quad\forall\, z\in \Gamma^0\,.$$
During the time interval $[0,T_1],$ each point $z\in \Gamma^1$ is unable to complete a full revolution around the origin,
because the time needed to cross either the second or the third quadrant is larger than $T_1.$ Using this information in connection to
the fact that the first and the third quadrants are positively invariant for the flow associated with \eqref{eq-2.3}, we find that
$$\text{rot}_z(T) < 1, \quad\forall\, z\in \Gamma^1\,.$$

An application of the Poincar\'{e}-Birkhoff fixed point theorem
guarantees for each $j= 1,\dots, k$ the existence of at least two
fixed points  $u_j= (u^j_x,u^j_y),$ $v_j=
(v^j_x,v^j_y)$ of the Poincar\'{e} map, with  $u_j, v_j$ in the interior of ${\mathcal A}$ and such that
$\text{rot}_{u_j}(T) = \text{rot}_{v_j}(T)= j.$ This in turns
implies the existence of at least two $T$-periodic solutions of
equation \eqref{eq-2.1} with $x(\cdot)$ having exactly $2j$-zeros
in the interval $[0,T].$
\finis

In this manner, we have proved Theorem \ref{th-A} for system \eqref{eq-2.5} in the special case of a stepwise weight function $a_{\lambda,\mu}$
as in \eqref{eq-2.1b}. Notice that no assumption on $\mu > 0$ is required. On the other hand, we have to suppose that $\Phi_2$ is
globally defined on ${\mathcal A}$.

\begin{remark}\label{rem-2.1}
From \eqref{eq-2.7} and the formulas for the period $\tau$ it is clear that assuming $T_1$ fixed and $\lambda$ large
is equivalent to suppose $\lambda$ fixed and $T_1$ large. This also follows from general considerations
concerning the fact that equation $x'' + \lambda g(x) = 0$ is equivalent to $u'' + \varepsilon^2 \lambda g(u) = 0$
for $u(\xi):= x(\varepsilon \xi).$
\finis
\end{remark}

\noindent
$(\mathbf{II})\;$ \textit{An intermediate step.}  We show how to improve the previous result if we add the condition that $\mu$ is sufficiently large.
First of all, we take $\Gamma^0$ and $\Gamma^1$ as before and $\lambda > \Lambda_k$ in order to produce the desired twist
for $\Phi$ at the boundary of ${\mathcal A}.$
Then we observe that the derivative of the energy $E_1$ along the trajectories of system \eqref{eq-2.3} is given by $(\lambda+\mu) y g(x),$
so it increases on the first and the third quadrant and decreases on the second and the fourth. Hence, if $\mu$ is sufficiently large,
we can find four arcs $\varphi_i\subseteq {\mathcal A}$, each one in the open $i$-th quadrant, with $\varphi_i$ joining $\Gamma^0$ and $\Gamma^1$
such that $\Phi_2(\varphi_i)$ is outside the region bounded by $\Gamma^1$ for $i=1,3$ and
$\Phi_2(\varphi_i)$ is inside the region bounded by $\Gamma^0$ for $i=2,4$. The corresponding position of ${\mathcal A}$ and $\Phi_2({\mathcal A})$
is illustrated in Figure \ref{fig03}.

\begin{figure}[htbp]
\centering
%\hspace*{-2cm}
\includegraphics[width=0.7\textwidth]{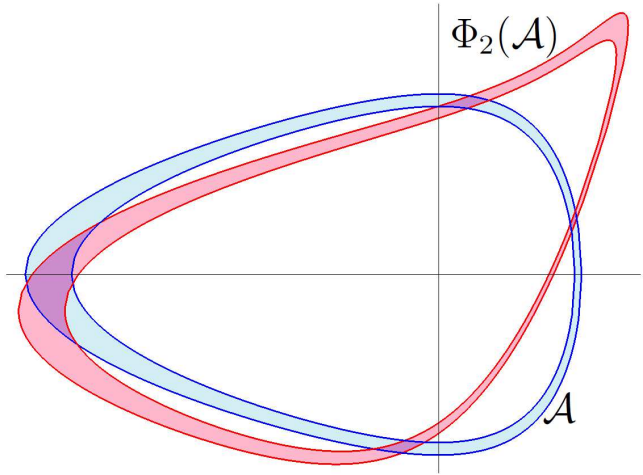}
\caption{A possible configuration of ${\mathcal A}$ and $\Phi_2({\mathcal A}).$ The example is obtained for $g(x) = - 1 + \exp x,$
$\lambda = \mu = 0.1$ and $T_2 =1.$ The inner and outer boundary $\Gamma^0$ and $\Gamma^1$ of the annulus ${\mathcal A}$ are the
energy level lines $E_1(x,y) = E_1(2,0)$ and $E_1(x,y) = E_1(2.1,0).$ To produce this geometry, the value of $T_1$ is not relevant because the
annulus is invariant for system \eqref{eq-2.2}. Since $\tau(c_0) < \tau(c_1),$ to have a desired twist condition,
we need to assume $T_1$ large enough.
}
\label{fig03}
\end{figure}

At this point, we enter in the setting of bend-twist maps. The arcs $\Phi_1^{-1}(\varphi_i)$
divide ${\mathcal A}$ into four regions, homeomorphic to rectangles.
The boundary of each of these regions can be split into two opposite sides contained in $\Gamma^0$ and $\Gamma^1$ and two other opposite sides
made by $\Phi_1^{-1}(\varphi_i)$ and $\Phi_1^{-1}(\varphi_{i+1})$ (mod$\,4$). On $\Gamma^0$ and $\Gamma^1$ we have the previously proved
twist condition on the rotation numbers, while on the other two sides we have $E_1(\Phi(P)) > E_1(P)$ for $P\in \Phi_1^{-1}(\varphi_i)$
with $i=1,3$ and $E_1(\Phi(P)) < E_1(P)$ for $P\in \Phi_1^{-1}(\varphi_i)$ with $i=2,4.$ Thus, using the Poincar\'{e}-Miranda theorem,
we obtain the existence of at least one fixed point
of the Poincar\'{e} map $\Phi$ in the interior of each of these regions.
In this manner, under an additional hypothesis of the form $\mu > \mu^*(\lambda),$ we improve Theorem \ref{th-A}
(for system \eqref{eq-2.5} and again in the special case of a stepwise weight),
finding at least four solutions with a given rotation number $j$ for $j=1,\dots, k.$
On the other hand, we still suppose that $\Phi_2$ is
globally defined on ${\mathcal A}$. The version of the bend-twist map theorem that we apply here is robust for small perturbations
of the Poincar\'{e} map, therefore the result holds also for some non-Hamiltonian systems whose vector field is close to that of \eqref{eq-2.1}.
\finis

\noindent
$(\mathbf{III})\;$  \textit{Proof of Theorem \ref{th-B} for the stepwise weight.}
First of all, we start with the same construction as
in part $(\mathbf{I})$ and choose $\Gamma^0,$ $\lambda > \Lambda_k$ according to \eqref{eq-2.7} and $\Gamma^1$ so that \eqref{eq-2.8} is satisfied.
Consistently with the previously introduced notation, we take
$$x_-^1 < x_-^0 < 0 < x_+^0 < x_+^1, \quad\text{with } \, \mathcal{G}(x_-^i) = c_i = \mathcal{G}(x_+^i), \; i = 0,1.$$
Notice that the closed curves $\Gamma^i$ intersect the coordinate axes at the points $(x_{\pm}^i,0)$ and $(0,\pm\sqrt{2\lambda c_i}).$
Next we choose $x_{\pm}^{\mu}$ and $y_0$ with
$$x_-^0 < x_-^{\mu} < 0 < x_+^{\mu} < x_+^0\,,\quad\text{and } \,  0 < y_0 < \sqrt{2\lambda c_0}$$
and define the orbits
$${\mathcal X}_{\pm}:=\gamma(x_{\pm}^{\mu},0), \quad  {\mathcal Y}_{\pm}:=\gamma(0,\pm y_0).$$
Setting
$$
{\mathcal T}({\mathcal X}_{\pm}):=\pm 2\int_{x_{\pm}^{\mu}}^{x_{\pm}^1}\dfrac{dx}{\sqrt{2\mu(\mathcal{G}(x)-\mathcal{G}(x_{\pm}^{\mu}))}}\,,\quad
{\mathcal T}({\mathcal Y}):= \int_{x_{-}^1}^{x_{+}^1}\dfrac{dx}{\sqrt{y_0^2+2\mu \mathcal{G}(x)}}
$$
we tune the values $x_{\pm}^{\mu},$ $y_0$ and $\mu$ so that
$$\max\{{\mathcal T}({\mathcal X}_{\pm}), {\mathcal T}({\mathcal Y})\} < T_2\,.$$
Clearly, given the other parameters, we can always choose $\mu$ sufficiently large, say $\mu > \mu^*$, so that the above condition is satisfied.

Finally, we introduce the stable and unstable manifolds, $W^{s}$ and $W^{u}$, for the origin as saddle point of system \eqref{eq-2.3}.
More precisely, we define the sets
$$W^{u}_{+}:= \{(x,y): E_2(x,y) = 0, x>0, y> 0\},$$
$$W^{s}_{-}:= \{(x,y): E_2(x,y) = 0, x<0, y> 0\},$$
$$W^{u}_{-}:= \{(x,y): E_2(x,y) = 0, x<0, y< 0\},$$
$$W^{s}_{+}:= \{(x,y): E_2(x,y) = 0, x>0, y< 0\},$$
so that $W^{s}=W^{s}_{-} \cup W^{s}_{+}$ and $W^{u}=W^{u}_{-} \cup W^{u}_{+}$.
The resulting configuration is illustrated in Figure \ref{fig04}.

\begin{figure}[htbp]
\centering
%\hspace*{-2cm}
\includegraphics[width=0.7\textwidth]{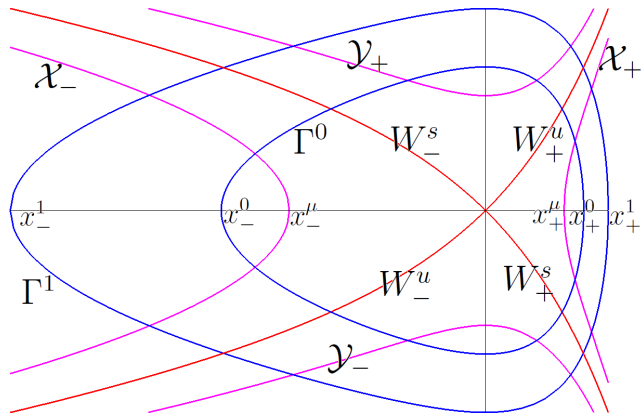}
\caption{The present figure shows the appropriate overlapping of the phase-portraits of systems \eqref{eq-2.2} and \eqref{eq-2.3}.
}
\label{fig04}
\end{figure}

The closed trajectories $\Gamma^{0},$ $\Gamma^{1}$ together with
${\mathcal X}_{\pm}\,,$ ${\mathcal Y}_{\pm}\,,$ $W^s_{\pm}$ and $W^u_{\pm}$ determine eight regions that we denote by ${\mathcal A}_{i}$
and ${\mathcal B}_{i}$ for $i=1,\dots,4,$ as in Figure \ref{fig05}.

\begin{figure}[htbp]
\centering
%\hspace*{-2cm}
\includegraphics[width=0.7\textwidth]{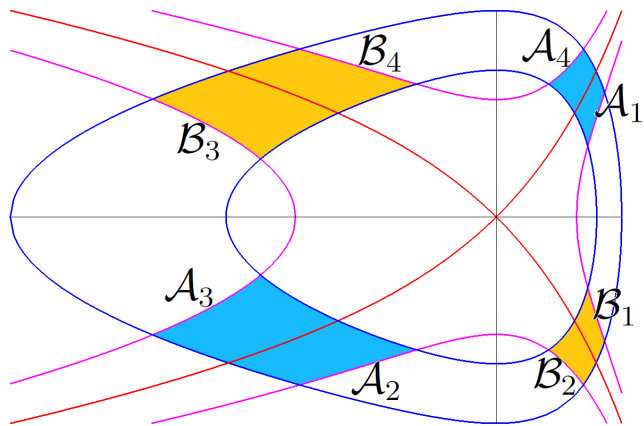}
\caption{The present figure shows the regions ${\mathcal A}_{i}$ and ${\mathcal B}_{i}$. We have labelled the regions following a clockwise order,
which is useful from the point of view of the dynamics.}
\label{fig05}
\end{figure}

Each of the regions  ${\mathcal A}_{i}$ and ${\mathcal B}_{i}$ is homeomorphic to the unit square and thus is a topological rectangle.
In this setting, we give an orientation to ${\mathcal A}_{i}$
by choosing ${\mathcal A}_{i}^-:= {\mathcal A}_{i}\cap (\Gamma^{0} \cup \Gamma^{1}).$ We take as
${\mathcal B}_{i}^-$ the closure of $\partial{\mathcal B}_{i}\setminus (\Gamma^{0} \cup \Gamma^{1}).$

We can now apply a result in the framework of the theory of topological horseshoes as presented in \cite{PaPiZa-08} and \cite{MRZ-10}.
Indeed, by the previous choice of $\lambda > \Lambda_k$ we obtain that
$$\Phi_1:\widehat{{\mathcal A}_{i}}\sap^k \widehat{{\mathcal B}_{i}}\,,\quad\forall\, i=1,\dots,4,$$
On the other hand, from $\mu > \mu^*$ it follows that
$$\Phi_2:\widehat{{\mathcal B}_{i}}\sap \widehat{{\mathcal A}_{i}}\,,\quad\forall\, i=1,\dots,4.$$
Then \cite[Theorem 3.1]{PaPiZa-08} (see also \cite[Theorem 2.1]{MRZ-10})
ensures the existence of at least $k$ fixed points for $\Phi = \Phi_2\circ\Phi_1$ in each of the regions ${\mathcal A}_{i}.$
This, in turns, implies the existence of $4k$ $T$-periodic solutions for system \eqref{eq-2.5}.

Such solutions are topologically different and can be classified, as follows: for each $j=1,\dots, k$ there is a solution
$(x,y)$ with
\begin{itemize}
\item[$\circ\;$] $(x(0),y(0))\in {\mathcal A}_1$ with $x(t)$ having $2j$ zeros in $]0,T_1[$ and strictly positive in $[T_1,T];$
\item[$\circ\;$] $(x(0),y(0))\in {\mathcal A}_2$ with $x(t)$ having $2j-1$ zeros in $]0,T_1[$  and one zero in $]T_1,T[;$
\item[$\circ\;$] $(x(0),y(0))\in {\mathcal A}_3$ with $x(t)$ having $2j$ zeros in $]0,T_1[$  and strictly negative in $[T_1,T];$
\item[$\circ\;$] $(x(0),y(0))\in {\mathcal A}_4$ with $x(t)$ having $2j-1$ zeros in $]0,T_1[$  and one zero in $]T_1,T[.$
\end{itemize}
In conclusion, for each $j=1,\dots,k$ we find at least four $T$-periodic solutions having precisely $2j$-zeros in $[0,T[.$ \finis

\begin{remark}\label{rem-2.2}
Having assumed that $g$ is bounded on ${\mathbb R}^-,$ we can also prove the existence of a $T$-periodic solution with $(x(0),y(0))\in {\mathcal A}_3$
and such that $x(t) < 0$ for all $t\in [0,T]$ while $y(t) = x'(t)$ has two zeros in $[0,T[.$ Moreover, the results from
\cite{MRZ-10,PaPiZa-08} guarantee also that each of the regions ${\mathcal A}_{i}$ contains a compact invariant set where
$\Phi$ is chaotic in the sense of Block and Coppel (see \cite{AK-01}).
\finis
\end{remark}

We further observe that, for equation \eqref{eq-2.1} the same results hold if condition $(g_-)$ is relaxed to
\begin{equation}\label{eq-2.9}
\lim_{x\to -\infty} \frac{g(x)}{x} = 0.
\end{equation}
In this manner we get the same number of four $T$-periodic solutions as obtained in \cite{BZ-13}. However, we stress that, even if the
conditions at infinity here and in that work are the same, nevertheless, the assumptions at the origin are completely different.
Indeed, in \cite{BZ-13} a one-sided superlinear condition in zero, of the form $g'(0^+) = 0$ or $g'(0^-)=0$ was required.
As a consequence, for $\lambda$ large, one could prove the existence of four $T$-periodic solutions with prescribed nodal properties
which come in pair, namely two ``small'' and two ``large''. In our case, if in place of $g_{0} > 0$ we assume $g'(0^+) = 0$ or $g'(0^-)=0,$
with the same approach we could prove the existence of eight $T$-periodic solutions, four ``small'' and four ``large''.

\bigskip

We conclude this section by observing that if we want to produce the same results for system \eqref{eq-1.1}, then we cannot replace
$(h_{\pm})$ or $(g_{\pm})$ with a weaker condition of the form of \eqref{eq-2.9}. Indeed, a crucial step in our proof is to have a twist condition,
that is a gap in the period between a fast orbit (like $\Gamma^0$) and slow one (like $\Gamma^1$). This is no more guaranteed for an autonomous system
of the form
\begin{equation*}
\begin{cases}
x'=h(y)\\
y'=-g(x)
\end{cases}
\end{equation*}
if $g(x)$ satisfies a sublinear condition at infinity as \eqref{eq-2.9}. Indeed, the slow decay of $g$ at infinity
could be compensated by a fast growth of $h$ at infinity. In \cite{CGM-00} the Authors
provide examples of isochronous centers for planar Hamiltonian systems even in the case when one of the two components is sublinear
at infinity.

\section{The general case: proof of the main results}\label{section-4}

Throughout the section, and consistently with Section \ref{section-2},
for each $s\in {\mathbb R}$ and $z\in {\mathbb R}^2,$ we denote by $(x(\cdot,s,z),y(\cdot;s,z))$ the solution of \eqref{eq-1.1} satisfying
$(x(s),y(s)) = z$ and, for $t\geq s,$ we set
$$\Phi_{s}^{t}(z):= (x(t,s,z),y(t;s,z)),$$
if the solution is defined on $[s,t].$

We prove both Theorem \ref{th-A}  (Theorem \ref{th-A1}) and Theorem \ref{th-B}  (Theorem \ref{th-B1}) assuming $(g_{-}).$ The proofs can be easily modified in order to take into account all the
other cases, namely $(g_{+})$, $(h_{-})$ or $(h_{+}).$ Concerning the $T$-periodic weight function we suppose
for simplicity that $a:{\mathbb R}\to {\mathbb R}$ is continuous and satisfies $(a^*).$
More general regularity conditions on $a(\cdot)$ can be considered as well.

\begin{theorem}\label{th-A1} Let $g, h$ be locally Lipschitz continuous functions.
Assume $(C_{0})$, $(g_{-})$ and the global continuability of the solutions. For each positive integer
$k$ there exists $\Lambda_k > 0$, such that for every $\lambda > \Lambda_k$ and $j=1,\dots,k,$ there are at least two $T$-periodic solutions for system
\eqref{eq-1.1} with $x$ having exactly $2j$-zeros in the interval $[0,T[\,.$
\end{theorem}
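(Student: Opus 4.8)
The plan is to reduce the general case to the Poincar\'{e}-Birkhoff framework of Theorem \ref{pb-02} by constructing, as in the stepwise computation of part $(\mathbf{I})$, an annular region bounded by two closed energy level lines of the autonomous positive-hump system $x'=h(y)$, $y'=-\lambda g(x)$, on whose boundary circles the rotation number satisfies the twist condition $(TC)$. Since the full Poincar\'{e} map splits as $\Phi=\Phi_2\circ\Phi_1$ and the inner annulus is invariant under $\Phi_1$, the required radii will be supplied by two level lines $\Gamma^0$ (fast, near the origin) and $\Gamma^1$ (slow, further out). The first task is therefore to establish the analogue of Proposition \ref{pr-2.1} for the general $(h,g)$: the time-map $\tau(c)$ of the center $x'=h(y)$, $y'=-\lambda g(x)$ is continuous, satisfies $\limsup_{c\to0^+}\tau(c)\le 2\pi/\sqrt{\lambda h_0 g_0}$ from the condition $(C_0)$ near the origin, and has at least one branch tending to $+\infty$. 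Here the boundedness hypothesis $(g_-)$ enters crucially: it forces one of the half-period integrals to diverge as the energy increases, so that slow orbits with arbitrarily large period exist.

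With the time-map behaviour in hand, I would fix $\lambda>\Lambda_k$ so that the fast orbit $\Gamma^0$ has period $\tau(c_0)<T_1/(k+1)$, guaranteeing at least $k+1$ complete clockwise windings during $[0,T_1]$ and hence $\mathrm{rot}_z(0,T_1)\ge k+1$ on $\Gamma^0$. I then select $\Gamma^1$ at a higher energy whose slow half-period exceeds the available time, so that on $\Gamma^1$ a full revolution cannot be completed during $[0,T_1]$, yielding $\mathrm{rot}_z(0,T_1)<1$. To pass from the rotation over $[0,T_1]$ to the rotation over the full period $[0,T]$, I would control the angular contribution of the negative-hump interval $[T_1,T]$ by a lower bound of the form $\mathrm{rot}_z(T_1,T)>-\tfrac12$ valid for all nonzero $z$, exactly as quoted from \cite[Lemma 3.1]{BZ-13}; combined with the estimates on $[0,T_1]$ this gives $\mathrm{rot}_z(T)>k$ on $\Gamma^0$ and $\mathrm{rot}_z(T)<1$ on $\Gamma^1$, which is precisely $(TC)$ with $\mathfrak{a}=1$, $\mathfrak{b}=k$. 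Applying the Ding-type Poincar\'{e}-Birkhoff theorem (Theorem \ref{pb-02}, via \cite[Corollary 2]{Re-97}) on the disc bounded by $\Gamma^1$ then produces, for each integer $j=1,\dots,k$, at least two fixed points of $\Phi$ with rotation number $j$, and the nodal count of $2j$ zeros of $x$ in $[0,T[$ follows from the standard correspondence between rotation number and transversal crossings of the $y$-axis recorded after Theorem \ref{pb-02}.

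The main obstacle, and the place where the general case genuinely differs from the stepwise one, is that the energy level lines of $x'=h(y)$, $y'=-\lambda g(x)$ are no longer circles, so the rotation-number estimates must be carried out with the integral formula \eqref{eq-1.3} rather than by the elementary quadrant-crossing arguments available for the switched system. In particular the bound $\mathrm{rot}_z(0,T_1)\ge k+1$ on $\Gamma^0$ must be derived from the fact that each full traversal of $\Gamma^0$ costs exactly one unit of rotation and time $\tau(c_0)$, while the upper bound on $\Gamma^1$ requires showing that one of the half-plane transit times already exceeds $T_1$; both reduce to the time-map analysis above, so the real work is concentrated in the first step. A secondary technical point is continuability: the hypothesis of global existence is assumed outright in this theorem, so the Poincar\'{e} map $\Phi_2$ is defined on the annulus and no truncation argument is needed, though one must still verify that $\Phi$ is an area- and orientation-preserving homeomorphism fixing the origin, which follows from the Hamiltonian structure \eqref{eq-H} exactly as in the stepwise discussion.
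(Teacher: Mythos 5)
Your proposal transplants the stepwise-weight geometry of Section \ref{section-3} to the general case, and that is where it breaks down. The pivot of your argument is that the annulus bounded by the energy level lines $\Gamma^0,\Gamma^1$ of the autonomous system $x'=h(y)$, $y'=-\lambda g(x)$ is invariant under $\Phi_1=\Phi_0^{T_1}$, so that rotations on $[0,T_1]$ can be counted through the time map $\tau(c)$ (``each full traversal of $\Gamma^0$ costs exactly one unit of rotation and time $\tau(c_0)$''). This invariance holds \emph{only} when $a^+$ is constant on $[0,T_1]$, i.e.\ precisely in the stepwise case; for a general continuous weight the system on $[0,T_1]$ is $x'=h(y)$, $y'=-\lambda a^+(t)g(x)$, the energy $E_1$ is not conserved, solutions starting on $\Gamma^0$ drift across level lines, and the statement that a solution completes $\lfloor T_1/\tau(c_0)\rfloor$ turns is meaningless. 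Worse, assumption $(a^*)$ only requires $a\succ 0$ on $[0,T_1]$, so $a^+$ may vanish on subintervals of positive length, during which no rotation is produced at all: no pointwise comparison with an autonomous center can give the lower twist. Your closing claim that the rotation estimates via \eqref{eq-1.3} ``reduce to the time-map analysis'' is therefore exactly backwards --- the time-map analysis (Proposition \ref{pr-2.1}) is unavailable here, and the paper itself warns at the end of Section \ref{section-3} (citing \cite{CGM-00}) against relying on period gaps for the general pair $(h,g)$. The same defect infects your upper twist estimate on $\Gamma^1$ through half-plane transit times of an orbit the solution does not follow.

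What the paper actually does is structurally different at both boundary estimates. For the lower twist it introduces the modified Pr\"{u}fer rotation number $\text{Rot}^p$ (following \cite{Bo-11}), chooses $p$ proportional to $1/(\lambda|a^+|_{L^\infty})$, and for solutions remaining in a small ball (where the $(C_0)$ bounds $h(\xi)\xi\ge(h_0-\varepsilon)\xi^2$, $g(\xi)\xi\ge(g_0-\varepsilon)\xi^2$ hold) obtains $\text{Rot}^p(u;0,T_1)\ge\frac{\sqrt{\lambda}}{2\pi}\kappa(\varepsilon)$ with $\kappa(\varepsilon)$ involving $\int_0^{T_1}a^+(t)\,dt$; this is robust when $a^+$ vanishes somewhere and yields the explicit $\Lambda_k$ of \eqref{Lambdak}. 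For the upper twist there is no slow orbit at all: using $(g_-)$, any solution that crosses the second or third quadrant satisfies a priori bounds $|y|\le M_1$, $|x|\le M_2$, so solutions staying uniformly large in norm can never cross those quadrants and hence satisfy $\text{rot}_z(T)<1$. The global continuability is used quantitatively, via the elastic properties $(i_1)$--$(i_2)$, to convert ``solutions staying small/large on $[0,T]$'' into conditions on initial radii $r_0,R_0$, producing the circles on which $(TC)$ holds. The parts of your proposal that do match the paper are peripheral: the bound $\text{rot}_z(T_1,T)>-\frac12$ from \cite[Lemma 3.1]{BZ-13}, the final application of the Poincar\'{e}-Birkhoff theorem in the version of \cite[Corollary 2]{Re-97} with $\mathfrak{a}=1$, $\mathfrak{b}=k$, and the correspondence between rotation number $j$ and $2j$ zeros of $x$. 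But the two twist estimates --- the heart of the proof --- need the replacement arguments above, and as written your construction would fail for any non-stepwise weight.
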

\begin{proof}
We split the proof into some steps in order to reuse some of them
for the proof of Theorem \ref{th-B1}.

\medskip

\noindent
\textit{Step 1. Evaluating the rotation number along the interval $[0,T_1]$ for small solutions.}
\\
Let $\varepsilon > 0$ be sufficiently small such that $g_0 - \varepsilon > 0$ and $h_0 -\varepsilon > 0$ and take $r^{\varepsilon} > 0$ such that,
by virtue of $(C_0),$
$$h(\xi)\xi \geq (h_0 -\varepsilon) \xi^2, \quad g(\xi)\xi \geq (g_0 -\varepsilon) \xi^2, \quad\forall\, |\xi|\leq r^{\varepsilon}.$$
Let $u(t):=(x(t),y(t))$ be a solution of \eqref{eq-1.1} such that $0 < ||u(t)||\leq r^{\varepsilon}$ for all $t\in [0,T_1].$
We consider the modified clockwise rotation number associated with the solution $u(\cdot)$ in the interval $[0,T_1]$
(which is the interval where $a \succ 0$),
defined as
$$\text{Rot}^p(u;0,T_1):= \frac{\sqrt{p}}{2\pi}\int_{0}^{T_1}\frac{h(y(t))y(t) + \lambda a^+(t)g(x(t))x(t)}{py^2(t) + x^2(t)}\,dt,$$
where $p > 0$ is a fixed number that will be specified later.
The modified rotation number can be traced back to the classical Pr\"{u}fer transformation
and it was successfully applied in \cite{FH-93} (see also \cite{Za-96} and the references therein).
A systematic use of the modified rotation number in the context of the Poincar\'{e}-Birkhoff theorem, with all the needed technical details, is
exhaustively described by Boscaggin in \cite{Bo-11}. Here we follow the same approach.
The key property of the number $\text{Rot}^p(u)$ is that, when for some $p$  it assumes an integer value, that same value
is independent on the choice of $p.$ Moreover, as a consequence of $h(y)y > 0$ for $y\not=0,$ we have that
if $\tau_1 < \tau_2$ are two consecutive zeros of $x(\cdot)$, then
$\text{Rot}^p(u;\tau_1,\tau_2) = 1/2$ (independently on $p$).
Hence, we can choose suitably the constant $p$ in order to estimate in a simpler way the rotation number.
In our case, if we take
\begin{equation*}
p:= \frac{1}{\lambda}\frac{h_0 - \varepsilon}{(g_0 - \varepsilon)|a^+|_{L^{\infty}(0,T_1)}}\,,
\end{equation*}
and recall that we are evaluating the rotation number on a ``small'' solution $u$ so that $h(y(t))y(t) \geq (h_0 - \varepsilon)y^2(t)$
and $g(x(t))x(t) \geq (g_0 - \varepsilon)x^2(t),$
we find
\begin{align*}
&\text{Rot}^p(u;0,T_1) \geq \frac{\sqrt{p}}{2\pi}\int_{0}^{T_1}
\frac{(h_0 - \varepsilon)y^2(t) + \lambda a^+(t)(g_0 - \varepsilon)x^2(t)}{py^2(t) + x^2(t)}\,dt\\
&\phantom{\qquad}= \frac{\sqrt{p}}{2\pi}\int_{0}^{T_1}
\frac{(\lambda(g_0 - \varepsilon)|a^+|_{L^{\infty}(0,T_1)})p y^2(t) + \lambda a^+(t)(g_0 - \varepsilon)x^2(t)}{py^2(t) + x^2(t)}\,dt\\
&\phantom{\qquad}\geq \frac{\lambda (g_0 - \varepsilon)\sqrt{p}}{2\pi} \int_{0}^{T_1} a^+(t) \, dt = \frac{\sqrt{\lambda}}{2\pi} \kappa(\varepsilon),
\end{align*}
where
$$\kappa(\varepsilon): = \left( \frac{(h_0 - \varepsilon)(g_0 - \varepsilon)}{|a^+|_{L^{\infty}(0,T_1)}} \right)^{1/2}\int_{0}^{T_1}  a^+(t) \, dt > 0.$$
Hence, given any positive integer $k,$ we can take
\begin{equation}\label{Lambdak}
\Lambda_{k}:= \left(\frac{2\pi}{\kappa(\varepsilon)}\right)^2(k+1)^2
\end{equation}
so that, for each $\lambda > \Lambda_{k}$ we obtain that
$\text{Rot}^p(u;0,T_1) > k+1$ and therefore, by \cite[Proposition 2.2]{Bo-11}, $\text{rot}_{z}(0,T_1) > k+1,$ for $z= u(0)$ and
$\text{rot}_{z}$ defined in \eqref{eq-1.3}.

\medskip

\noindent
\textit{Step 2. Evaluating the rotation number along the interval $[0,T]$ for small solutions.}
\\
Consider now the interval $[T_1,T]$ where $a(t) \leq 0.$ In this case, we are in the same situation as in \cite[Lemma 3.1]{BZ-13}
and the corresponding result implies that $\text{rot}_{z_1}(T_1,T) > -1/2,$
for $z= u(T_1)$. As a consequence, we conclude that if
$u(t):=(x(t),y(t))$ is a solution of \eqref{eq-1.1} such that $0 < ||u(t)||\leq r^{\varepsilon}$ for all $t\in [0,T_1]$ and $\lambda > \Lambda_{k}$,
then $\text{rot}_{z}(T):=\text{rot}_{z}(0,T) > k$ for $z= u(0).$

\medskip

\noindent
\textit{Step 3. Consequences of the global continuability.}
\\
The global continuability of the solutions implies
the fulfillment of the so-called ``elastic property'' (cf. \cite{Kr-68},\cite{CMZ-90}). In our case, recalling also that
nontrivial solutions never hit the origin, we obtain
\begin{itemize}
\item[$(i_1)\;$] for each $r_1 > 0$ there exists $r_2 \in ]0,r_1[$ such that $||z|| \leq r_2$
implies $0 < ||\Phi_{0}^{t}(z)|| \leq r_1\,,\;\forall\, t\in [0,T];$
\item[$(i_2)\;$] for each $R_1 > 0$ there exists $R_2 > R_1$ such that $||z|| \geq R_2$
implies $||\Phi_{0}^{t}(z)|| \geq R_1\,,\;\forall\, t\in [0,T]$
\end{itemize}
(see \cite[Lemma 2]{Za-96}).

\medskip

\noindent
\textit{Step 4. Rotation numbers for small initial points.}
\\
Suppose now that $\varepsilon > 0$ and $\lambda > \Lambda_k$ are chosen as in Step 1 and let $\mu > 0$ be fixed.
Using $(i_1)$ in Step 3 we determine a small radius
$r_0 = r_0(\varepsilon,\lambda,\mu)> 0$
such that for each initial point $z\in {\mathbb R}^2$ with $||z|| = r_0$ it follows that
the solution $u(t)=(x(t),y(t))$ of \eqref{eq-1.1} with $u(0) = z$ satisfies $0 < ||u(t)||\leq r^{\varepsilon}$ for all $t\in [0,T_1].$
Hence, by Step 2 we conclude that
\begin{equation}\label{smallrotations}
\text{rot}_{z}(T) > k, \quad\forall\, z \; \text{with }\, ||z||= r_0\,.
\end{equation}

\medskip

\noindent
\textit{Step 5. Evaluating the rotation number along the interval $[0,T]$ for large solutions.}
\\
Suppose, from now on, that $\lambda$ and $\mu$ are fixed as in Step 4.
Let $u(t)=(x(t),y(t))$ be any nontrivial solution of \eqref{eq-1.1}
which crosses the third quadrant in the phase-plane.
If this happens, we can assume that there is an interval $[\alpha,\beta] \subseteq [0,T]$ such that
$x(t) \leq 0$ for all $t\in [\alpha,\beta]$ with $x(\alpha) = 0 = y(\beta)$ and $x(t) < 0$ for all $t\in\,]\alpha,\beta]$
as well as $y(t) < 0$ for all $t\in [\alpha,\beta[$. Note that, from the first equation in
\eqref{eq-1.1}, when $y(t) = 0,$ also $x'(t) = 0.$
Assumption
$(g_{-})$ implies there is a bound, say $M$ for $g(x)$ when $x\leq 0,$ namely $|g(x)| \leq M$ for all $x\leq 0.$
Thus, integrating the second equation
in system \eqref{eq-1.1} we get that, for all $t\in [\alpha,\beta],$ the following estimate holds:
\begin{align*}
|y(t)| &= \left|y(\beta) + \int_{\beta}^{t} a_{\lambda,\mu}(s)g(x(s))\,ds\right|\\
&\leq M \int_{\alpha}^{\beta} a_{\lambda,\mu}(s)\,ds \leq M \left(\lambda \int_{0}^{T_1} a^+(t)\,dt +  \mu \int_{T_1}^{T} a^-(t)\,dt \right)=: M_1\,.
\end{align*}
Now, integrating the first equation of the same system, we obtain the estimate below, for all $t\in [\alpha,\beta]$:
$$|x(t)| = \left|x(\alpha) + \int_{\alpha}^{t} h(y(s))\,ds\right| \leq M_2:= (\beta - \alpha) \max\{|h(y)|\,;\, |y|\leq M_1\}.$$
With a similar argument, it is easy to check that the same bounds for $|y(t)|$ and $|x(t)|$ hold if the solution crosses the second quadrant
instead of the third one. Hence,
any solution that in a time-interval $[\alpha,\beta]$ crosses the third quadrant, or the second quadrant,
is such that $|y(t)|\leq M_1$ and $|x(t)|\leq M_2$ for all $t\in [\alpha,\beta].$

In view of the above estimates and arguing by contradiction, we can then conclude that if the solution $u$ satisfies
$$||u(t)||\geq M_3:=1 + \left(M_1 + M_2\right)^{1/2},\quad \forall\, t\in [0,T],$$
then for $u(\cdot)$ is impossible to cross the third quadrant and it is also impossible to cross the second one.

\medskip

\noindent
\textit{Step 6. Rotation numbers for large initial points.}
\\
Using $(i_2)$ in Step 3 we determine a large radius
$R_0 = R_{0}(\lambda,\mu) > 0$
such that for each initial point $z\in {\mathbb R}^2$ with $||z|| = R_0$ it follows that
the solution $u(t)=(x(t),y(t))$ of \eqref{eq-1.1} with $u(0) = z$ satisfies $||u(t)||\geq M_3$ for all $t\in [0,T].$
Hence, by Step 5 we conclude that
\begin{equation}\label{largerotations}
\text{rot}_{z}(T) < 1, \quad\forall\, z \; \text{with }\, ||z||= R_0\,.
\end{equation}
Indeed, if, by contradiction, $\text{rot}_{z}(T) \geq 1,$ then the solution $u(t)$ of \eqref{eq-1.1} with $u(0) = z$ must cross at least once
one between the third and the second quadrant and this fact is forbidden by the choice of $R_0$ which implies that $||u(t)||\geq M_3$
for all $t\in [0,T].$

\medskip

\noindent
\textit{Step 7. Applying Poincar\'{e}-Birkhoff fixed point theorem.}
\\
At this point we can to conclude the proof. From \eqref{smallrotations} and \eqref{largerotations}
we have the twist condition $(TC)$ satisfied for $\mathfrak{b}=k$ and $\mathfrak{a}=1$ and the thesis follows as explained in the introductory discussion preceding the
statement of Theorem \ref{th-A}.
\qed

\end{proof}

\medskip
\begin{remark}\label{rem-3.1}{In view of the above proof, a few observations are in order.

1. We think that the choice of $\Lambda_k$ in \eqref{Lambdak}, although reasonably good,
is not the optimal one. One could slightly improve it, by using some comparison argument with the rotation numbers
associated with the limiting linear equation
$$x' = h_0 y, \quad y' = -\lambda g_0 a^+(t) x.$$
We do not discuss further this topic in order to avoid too much technical details in the proof.

2. In the statement of Theorem \ref{th-A1} we have explicitly recalled the assumptions on $g$ and $h$ to be locally Lipschitz continuous functions
so to have a well defined (single-valued) Poincar\'{e} map. With this respect, we should mention
that there is a recent version of the Poincar\'{e}-Birkhoff theorem due to Fonda and Ure\~{n}a \cite{FU-16}, \cite{FU-17} which, for Hamiltonian systems
like \eqref{eq-1.1}, does not require the uniqueness of the solutions for the initial value problems and just the continuability of the solutions
on $[0,T]$ is needed. The theorems in \cite{FU-17} apply to higher dimensional Hamiltonian systems as well. For another recent application
of such resuts to planar systems, in which the uniqueness of the solutions of the Cauchy problems is not required,
see also \cite{COZ-16}. In our case, even if we apply the Fonda-Ure\~{n}a theorem, we still need to assume at least an upper bound on $g(x)/x$
and $h(y)/y$ near zero, so to avoid the possibility that a (nontrivial) solution
$u(\cdot)$ of \eqref{eq-1.1} with $u(0)\not= (0,0)$ may hit the origin at some time $t\in \,]0,T],$
thus preventing the rotation number to be well defined.
See \cite[Section 4]{Bu-78} for a detailed discussion of these aspects.}
\finis
\end{remark}

Now we are in position to give the proof of Theorem \ref{th-B} (which is presented as Theorem \ref{th-B1} below). For the next result we
do not assume the global continuability of the solutions. Accordingly, both $h$ (at $\pm\infty$) and $g$ (at $+\infty)$ may have a superlinear growth.

For the foregoing proof we recall that we denote by $D(R)$ and $D[R]$ the open and the closed disc in ${\mathbb R}^2$
of center the origin and radius $R > 0.$ Given $0 < r < R,$ we denote by $A[r,R]$ the closed annulus $A[r,R]:= D[R]\setminus D(r).$ Let also $Q_i$
for $i=1,2,3,4$ be the usual quadrants of ${\mathbb R}^2$ counted in the natural counterclockwise sense starting from
$$Q_1:=\{(x,y): x\geq 0, y\geq 0\}.$$

\begin{theorem}\label{th-B1} Let $g, h$ be locally Lipschitz continuous functions satisfying $(C_{0})$.
Suppose also that $(g_{-})$ holds. For each positive integer
$k$ there exists $\Lambda_k > 0$, such that for every $\lambda > \Lambda_k$ there exists $\mu^* = \mu^*(\lambda)$
such that for each $\mu > \mu^*$ and $j=1,\dots,k,$ there are at least four $T$-periodic solutions for system
\eqref{eq-1.1} with $x$ having exactly $2j$-zeros in the interval $[0,T[\,.$
\end{theorem}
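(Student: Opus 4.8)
The plan is to carry the geometric construction of part $(\mathbf{III})$ of Section \ref{section-3} over to the present non-autonomous setting, replacing the explicit autonomous orbits and saddle separatrices of the stepwise model by rotation-number estimates and comparison arguments. The Poincar\'{e} map again factors as $\Phi = \Phi_2\circ\Phi_1$, where $\Phi_1=\Phi_0^{T_1}$ is generated over the positive hump $[0,T_1]$ and $\Phi_2=\Phi_{T_1}^{T}$ over the negative hump $[T_1,T]$. The whole argument is to be arranged so that every estimate involving $\Phi_1$ lives on $[0,T_1]$, where $(g_-)$ already guarantees continuability and hence the elastic property on that fixed interval; blow-up may occur only under $\Phi_2$, and is controlled by the location of the regions. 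In this way no global continuability is required.

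First I would reuse Steps 1--2 of the proof of Theorem \ref{th-A1} unchanged: with the same $\Lambda_k$ as in \eqref{Lambdak} and $\lambda>\Lambda_k$, every solution that stays small on $[0,T_1]$ satisfies $\text{rot}_{z}(0,T_1)>k+1$. Combined with the elastic property on $[0,T_1]$, this produces an inner curve $\Gamma^0$ (a small level line of the positive-hump energy $\mathcal{H}(y)+\lambda\mathcal{G}(x)$) along which $\Phi_1$ rotates points more than $k+1$ times. For the outer curve $\Gamma^1$ I would run the argument of Step 5 of Theorem \ref{th-A1} but restricted to $[0,T_1]$: using the bound $|g(x)|\le M$ for $x\le0$ coming from $(g_-)$, a solution of large norm cannot cross the second or the third quadrant within $[0,T_1]$, whence $\text{rot}_{z}(0,T_1)<1$ on a large level line $\Gamma^1$. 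This yields a genuine twist for $\Phi_1$ across the annulus $\mathcal{A}$ bounded by $\Gamma^0$ and $\Gamma^1$, and uses continuability on $[0,T_1]$ only.

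Next I would split $\mathcal{A}$ into eight oriented rectangles $\widehat{\mathcal{A}_i},\widehat{\mathcal{B}_i}$ ($i=1,\dots,4$) as in Figure \ref{fig05}, with $\mathcal{A}_i^-\subseteq\Gamma^0\cup\Gamma^1$ and $\mathcal{B}_i^-$ transverse to the annulus, the internal cuts now given by suitable separatrix-like arcs of the negative-hump dynamics in place of the exact manifolds $W^s_\pm,W^u_\pm$. The twist above gives at once the first stretching relation
$$\Phi_1:\widehat{\mathcal{A}_i}\sap^k\widehat{\mathcal{B}_i}\,,\qquad i=1,\dots,4,$$
the multiplicity $k$ reflecting the more-than-$k+1$ turns along $\Gamma^0$ against the less-than-one turn along $\Gamma^1$. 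The parameter $\mu$ then serves to force the complementary relation
$$\Phi_2:\widehat{\mathcal{B}_i}\sap\widehat{\mathcal{A}_i}\,,\qquad i=1,\dots,4.$$
To obtain it I would use, as in part $(\mathbf{II})$ of Section \ref{section-3}, that the derivative of $\mathcal{H}(y)+\lambda\mathcal{G}(x)$ along the negative-hump flow equals $g(x)h(y)(\lambda+\mu a^-(t))$, hence is positive in the first and third quadrants, negative in the second and fourth, and of size proportional to $\mu$. For $\mu$ large this drives the parts of $\mathcal{B}_i$ in the first and third quadrants outward beyond $\Gamma^1$ and the parts in the second and fourth quadrants inward inside $\Gamma^0$, so that the $\Phi_2$-image of any path joining the two components of $\mathcal{B}_i^-$ crosses $\mathcal{A}_i$ from $\Gamma^0$ to $\Gamma^1$. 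Simultaneously one must check that $\Phi_2$ is defined on each $\mathcal{B}_i$, i.e. that these arcs reach the boundary of the working region before the blow-up time governed by \eqref{intG}; this is secured by the same tuning $\mu>\mu^*(\lambda)$ that produces the stretching.

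Composing the two relations gives $\Phi=\Phi_2\circ\Phi_1:\widehat{\mathcal{A}_i}\sap^k\widehat{\mathcal{A}_i}$, so \cite[Theorem 3.1]{PaPiZa-08} (see also \cite[Theorem 2.1]{MRZ-10}) yields at least $k$ fixed points of $\Phi$ in each $\mathcal{A}_i$, one for each wrapping $j=1,\dots,k$, hence four $T$-periodic solutions with exactly $2j$ zeros of $x$ in $[0,T[$ for every such $j$; their distribution between $]0,T_1[$ and $]T_1,T[$ is read off from the rotation numbers exactly as in the list of Section \ref{section-3}. The main obstacle is the second stretching relation $\Phi_2:\widehat{\mathcal{B}_i}\sap\widehat{\mathcal{A}_i}$: in the absence of the autonomous saddle and of the closed-form blow-up times of the stepwise model, one has to quantify the large-$\mu$ stretching of the non-autonomous negative-hump flow and, at the same time, locate the eight regions---without exact stable and unstable manifolds---precisely enough that $\Phi_2$ remains defined on them. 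This coupling of a sharp stretching estimate with the control of blow-up is the delicate point on which the whole construction rests.
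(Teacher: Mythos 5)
Your overall strategy is the paper's: factor $\Phi=\Phi_{T_1}^{T}\circ\Phi_{0}^{T_1}$, establish the twist only on $[0,T_1]$ (where $(g_-)$ gives free continuability), and conclude by composing two stretching-along-paths relations. But the geometric implementation you propose---transplanting the eight-region picture of Section \ref{section-3}, part $(\mathbf{III})$---has a genuine gap in the non-autonomous setting. That picture rests on two invariance properties that are lost once $a^+(t)$ is time-dependent: first, the level lines $\Gamma^0,\Gamma^1$ of $\mathcal{H}(y)+\lambda\mathcal{G}(x)$ are invariant for the \emph{autonomous} positive-hump system, which is what makes the annulus $\mathcal{A}$ invariant under $\Phi_1$ and lets the twist yield $\Phi_1:\widehat{\mathcal{A}_i}\sap^{k}\widehat{\mathcal{B}_i}$ ``at once''; for a general weight the energy is not conserved along $\Phi_1$, images of paths in $\mathcal{A}_i$ drift radially out of $\mathcal{A}$, and angular information alone does not force them to cross a $\mathcal{B}_i$ sitting inside the same energy annulus between its $[\cdot]^-$ sides. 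Second, the internal cuts of the stepwise construction are arcs of the genuine stable/unstable manifolds of the autonomous saddle; the negative-hump flow is now also non-autonomous, so your ``separatrix-like arcs'' do not exist as invariant objects, and you never say how to define them or the orientation of $\mathcal{B}_i$. The paper's proof of Theorem \ref{th-B1} circumvents exactly this: it abandons energy annuli entirely, works with two nested \emph{Euclidean} annuli $A[r_0,R_0]\subseteq A[\mathfrak{s}_0,\mathfrak{S}_0]$ obtained from the compactness estimate \eqref{eq-frak}, takes as rectangles the quadrant sectors $\mathcal{P}_1,\mathcal{P}_2$ (in $Q_1,Q_3$) and $\mathcal{M}_1,\mathcal{M}_2$ (in $Q_4,Q_2$), and orients the $\mathcal{M}_i$ by the coordinate axes, so that the crossing of $\mathcal{M}_i^-$ is certified by the angle sweep in $(TC_*)$ alone, with only the crude radial confinement \eqref{eq-frak} required---no invariant curves anywhere. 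The fixed points are then located in $4k$ pairwise disjoint sets $H'_{i,j},H''_{i,j}\subseteq\mathcal{P}_i$ selected by the value of the angle $\theta(T_1,z)$, which simultaneously encodes the nodal count $2j$ or $2j-1$ on $]0,T_1[$.

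The second gap concerns blow-up. Your claim that definedness of $\Phi_2$ on the $\mathcal{B}_i$ ``is secured by the same tuning $\mu>\mu^*$'' is unsubstantiated and in fact runs against the monotonicity: under \eqref{intG}, blow-up times \emph{shrink} as $\mu$ grows, so enlarging $\mu$ makes definedness harder, not easier. The paper's device is a truncation: replace $g,h$ outside $[-\mathfrak{S}_0,\mathfrak{S}_0]$ by $\hat{g}$ (bounded) and $\hat{h}$ (linear growth), so the truncated system is globally defined on $[T_1,T]$; prove the stretching $\hat{\Phi}_{T_1}^{T}:\widehat{\mathcal{M}_i}\sap\widehat{\mathcal{P}_\ell}$ for \emph{all} pairs $(i,\ell)$ following \cite[pp.~85--86]{PaZa-04a}; and then observe that the sub-paths realizing the stretching remain in $D[0,\mathfrak{S}_0]$ for all $t\in[T_1,T]$, hence solve the original system, so ${\Phi}_{T_1}^{T}$ inherits the relation. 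Note also that the SAP formalism only requires $\Phi_2$ to be defined on the compact subsets $H$ carried by the sub-paths, never on a whole region, so demanding that $\Phi_2$ be defined on each $\mathcal{B}_i$ is both unnecessary and possibly false. Your energy-derivative heuristic (positive in $Q_1,Q_3$, negative in $Q_2,Q_4$, of size proportional to $\mu$) correctly identifies the mechanism behind $\mu^*$, but without the truncation step (or an equivalent a priori bound) the second stretching relation---which you yourself flag as the crux---is not established.
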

\begin{proof}
For our proof, we will take advantage of some steps already settled in the proof of Theorem \ref{th-A1}.

First of all, we consider system \eqref{eq-1.1} on the interval $[0,T_1]$, so that the system can be written as
\begin{equation}\label{eq-reduced}
x' = h(y), \qquad y' = - \lambda a^+(t) g(x)
\end{equation}
and observe that all the solutions of \eqref{eq-reduced} are globally defined on $[0,T_1].$ To prove this fact, we
observe that the sign assumptions on $h$ and $g$ in $(C_0)$ and $(g_-)$ guarantee that \eqref{eq-reduced} belongs to the class of equations for which
the global continuability of the solutions was proved in \cite{DiZa-96}. Hence our claim is proved.

Now, we repeat the same computations as in the Steps 1-3-4-5-6 of the preceding proof (with only minor modifications, since now we work on $[0,T_1]$
instead of $[0,T]$) and, having fixed $\lambda >\Lambda_{k}$ (with the same constants $\Lambda_k$ as in \eqref{Lambdak}),
we are in the following setting:
{\em
\begin{enumerate}
\item[$(TC_*)$\;] There are constants $r_0 = r_{0}(\varepsilon,\lambda)$ and $R_0 = R_{0}(\lambda)$, with $0 < r_{0} < R_{0}$, such that\\
$\displaystyle{\text{rot}_{z}(T_1) > k+1,\,\forall\, z: \, ||z||= r_0\,; \;\;
\text{rot}_{z}(T_1) < 1, \,\forall\, z: \, ||z||= R_0\,.}$
\end{enumerate}
}

By a classical compactness argument following by the global continuability of the solutions of \eqref{eq-reduced} we can determine
two positive constants $\mathfrak{s}_0=\mathfrak{s}_0(\lambda,r_0,R_0)$ and $\mathfrak{S}_0=\mathfrak{S}_0(\lambda,r_0,R_0)$ with
$$0 < \mathfrak{s}_0 < r_0 < R_0 < \mathfrak{S}_0\,,$$
such that
\begin{equation}\label{eq-frak}
\mathfrak{s}_0 \leq ||\Phi_0^t(z)|| \leq \mathfrak{S}_0\,,\;\; \forall\, t\in [0,T_1]\,,\;\; \forall\, z: \; r_0\leq ||z||\leq R_{0}\,.
\end{equation}
We introduce now the following sets, which are all annular sectors and hence topological rectangles according to the terminology
of the Introduction.
$${\mathcal P}_1:= A[r_0,R_0]\cap Q_1\,,\quad {\mathcal P}_2:= A[r_0,R_0]\cap Q_3\,,$$
$${\mathcal M}_1:= A[\mathfrak{s}_0,\mathfrak{S}_0]\cap Q_4\,,\quad {\mathcal M}_2:= A[\mathfrak{s}_0,\mathfrak{S}_0]\cap Q_2\,.$$
To each of these sets we give an \textit{orientation}, by selecting a set $[\cdot]^-$ which is the union of two disjoint arcs of its boundary, as follows.
$${\mathcal P}_i^-:= {\mathcal P}_i\cap \partial A[r_0,R_0], \quad \widehat{{\mathcal P}_{i}}:=({\mathcal P}_i,{\mathcal P}_i^-), \;\; i=1,2,$$
$${\mathcal M}_i^-:= {\mathcal M}_i\cap \{(x,y): xy = 0\}, \quad \widehat{{\mathcal M}_{i}}:=({\mathcal M}_i,{\mathcal M}_i^-), \;\; i=1,2.$$

To conclude the proof, we show that for each integer $j=1,\dots, k$ and $i=1,2$
there is a pair of compact disjoint sets $H'_{i,j}\,, H''_{i,j}
\subseteq {\mathcal P}_i$ such that
\begin{equation}\label{sap}
(H,\Phi): \widehat{{\mathcal P}_{i}}\sap \widehat{{\mathcal P}_{i}}\,\quad i=1,2,
\end{equation}
where $H$ stands for  $H'_{i,j}$ or  $H''_{i,j}$ and $\Phi:= \Phi_{0}^{T}.$ Along the proof we will also check that the $4k$ sets
$H'_{i,j}$ and $H''_{i,j}$ for $i=1,2$ and $j=1,\dots,k$ are pairwise disjoint. A fixed point theorem introduced in \cite{PaZa-02}
and recalled in Section \ref{section-1}
(see \cite[Theorem 3.9]{PaZa-04b} for the precise formulation which is needed in the present situation) ensures the existence of at least a
fixed point for the Poincar\'{e} map $\Phi$ in each of the $4k$ sets $H'_{i,j}$ and $H''_{i,j}$.

To prove \eqref{sap} we proceed with two steps. First we show that, for any fixed $j\in \{1,\dots k\}$ there is a
compact set $H'_{1,j} \subseteq {\mathcal P}_{1}$ such that
\begin{equation}\label{sap11}
(H'_{1,j},\Phi_{0}^{T_1}): \widehat{{\mathcal P}_{1}}\sap \widehat{{\mathcal M}_{1}}
\end{equation}
and another compact set
$H''_{1,j} \subseteq {\mathcal P}_{1}$ such that
\begin{equation}\label{sap12}
(H''_{1,j},\Phi_{0}^{T_1}): \widehat{{\mathcal P}_{1}}\sap \widehat{{\mathcal M}_{2}}\,.
\end{equation}
In the same manner we also prove that there are disjoint compact sets $H'_{2,j}\,,$ $H''_{2,j}\subseteq {\mathcal P}_{2}$ such that
\begin{equation}\label{sap21}
(H'_{2,j},\Phi_{0}^{T_1}): \widehat{{\mathcal P}_{2}}\sap \widehat{{\mathcal M}_{2}}
\end{equation}
and
\begin{equation}\label{sap22}
(H''_{2,j},\Phi_{0}^{T_1}): \widehat{{\mathcal P}_{2}}\sap \widehat{{\mathcal M}_{1}}\,.
\end{equation}
Next, we prove that
\begin{equation}\label{sap2}
\Phi_{T_1}^{T}: \widehat{{\mathcal M}_{i}}\sap \widehat{{\mathcal P}_{\ell}}, \quad \forall\, i=1,2, \; \forall\, \ell = 1,2.
\end{equation}
Clearly, once all the above relations have been verified, we obtain \eqref{sap}, using the composition $\Phi = \Phi_{T_1}^{T}\circ \Phi_{0}^{T_1}$
and counting correctly all the possible combinations.

\medskip
\noindent
\textit{Proof of \eqref{sap11}.} We choose a system of polar coordinates $(\theta,\rho)$
starting at the positive $y$-axis and counting the positive rotations in the
clockwise sense, so that, for $z\not=0$ and $t\in [0,T_1],$ $\theta(t,z)$ denotes the angular coordinate associated with the solution
$u = (x,y)$ of \eqref{eq-reduced} with $u(0) = z.$ For $z\in {\mathcal P}_1$ we already know that $u(t) \in A[\mathfrak{s}_0,\mathfrak{S}_0]$
for all $t\in [0,T_1].$ For any fixed $j\in\{1,\dots,k\}$ we define
$$H'_{1,j}:= \{z\in {\mathcal P}_1\,:\, \theta(T_1,z)\in [(\pi/2) + 2j\pi,\pi + 2j\pi]\}.$$
Note that an initial point $z\in {\mathcal P}_1$ belongs to $H'_{1,j}$ if and only if $\Phi_{0}^{T_1}(z)\in {\mathcal M}_1$
with $x(\cdot)$ having precisely $2j$ zeros in the interval $]0,T_1[\,.$ Then it is clear that $H'_{1,j_1}\cap H'_{1,j_2}=\emptyset$
for $j_1\not=j_2\,.$

Let $\gamma:[a_0,a_1]\to {\mathcal P}_1$ be a continuous map such that $||\gamma(a_0)|| = r_0$ and $||\gamma(a_1)|| = R_0\,,$
that is $\gamma$ is a path contained in (with values in) ${\mathcal P}_1$ and meeting the opposite sides of ${\mathcal P}^-_1\,.$
For simplicity in the notation, we set
$$\vartheta(t,\xi):= \theta(t,\gamma(\xi)).$$
As we have previously observed $\Phi_{0}^t(\gamma(\xi))\in A[\mathfrak{s}_0,\mathfrak{S}_0]$ for all $t\in [0,T_1]$ and $\xi\in [a_0,a_1].$
From property $(TC_*)$ we have that
$$\vartheta(T_1,a_0) > \vartheta(0,a_0) + 2(k+1)\pi \geq 2(k+1)\pi \geq 2(j+1)\pi,$$
while
$$\vartheta(T_1,a_1) < \vartheta(0,a_0) + 2\pi \leq \frac{\pi}{2} + 2\pi.$$
The continuity of the map $\varphi:[a_0,a_1]\ni \xi\mapsto \vartheta(T_1,\xi)$ implies that the range of $\varphi$ covers the interval
$[(\pi/2) + 2j\pi,\pi + 2j\pi].$ Therefore there exist $b_0,b_1$ with
$a_0 < b_0 < b_1 < a_1$ such that $\vartheta(T_1,b_0) = \pi + 2j\pi,$ $\vartheta(T_1,b_1) = (\pi/2) + 2j\pi$ and
$$\vartheta(T_1,\xi) \in \left[\frac{\pi}{2} + 2j\pi,\pi + 2j\pi\right], \quad \forall\, \xi \in [b_0,b_1].$$
If we denote by $\sigma$ the restriction of the path $\gamma$ to the subinterval $[b_0,b_1],$ we have that $\sigma$ has values in $H'_{1,j}$
and, moreover the path $\Phi_{0}^{T_1}\circ \sigma$ has values in ${\mathcal M}_1$ and connects the two components of ${\mathcal M}^-_1\,.$
Thus the validity of \eqref{sap11} is checked.

\medskip
\noindent
\textit{Proof of \eqref{sap12}.} This is only a minor variant of the preceding proof and, with the same setting and notation as above, we just define
$$H''_{1,j}:= \{z\in {\mathcal P}_1\,:\, \theta(T_1,z)\in [(3\pi/2) + 2(j-1)\pi,2j\pi]\}.$$
An initial point $z\in {\mathcal P}_1$ belongs to $H''_{1,j}$ if and only if $\Phi_{0}^{T_1}(z)\in {\mathcal M}_2$
with $x(\cdot)$ having exactly $2j-1$ zeros in the interval $]0,T_1[\,.$ Then it is clear that $H''_{1,j_1}\cap H''_{1,j_2}=\emptyset$
for $j_1\not=j_2\,.$ Moreover, it is also evident that all the sets $H'$ and $H''$ are pairwise disjoint. The rest of the proof follows the same steps
as the previous one, with minor modifications and using again the crucial property $(TC_*).$

\medskip
\noindent
\textit{Proof of \eqref{sap21} and \eqref{sap22}.} Here we follow a completely symmetric argument by defining a family of pairwise disjoint
compact subsets of ${\mathcal P}_2$ as
$$H'_{2,j}:= \{z\in {\mathcal P}_2\,:\, \theta(T_1,z)\in [(3\pi/2) + 2j\pi,2(j+1)\pi]\}$$
and
$$H''_{2,j}:= \{z\in {\mathcal P}_2\,:\, \theta(T_1,z)\in [(\pi/2) + 2j\pi,\pi +2j\pi]\},$$
respectively.
Note that we consider an initial point $z\in {\mathcal P}_2$ with an associated angle $\theta(0,z)\in [\pi,(3/2)\pi].$
The rest of the proof is a mere repetition of the arguments presented above.

\medskip
\noindent
\textit{Proof of \eqref{sap2}.} We have four conditions to check, but it is clear that it will be sufficient to prove
\begin{equation}\label{sap2n}
\Phi_{T_1}^{T}: \widehat{{\mathcal M}_{1}}\sap \widehat{{\mathcal P}_{1}}\;\; \text{and } \;
\Phi_{T_1}^{T}: \widehat{{\mathcal M}_{1}}\sap \widehat{{\mathcal P}_{2}}\,,
\end{equation}
the other case, being symmetric. In this situation, we have only to repeat step by step the argument described in \cite[pp. 85-86]{PaZa-04a} where a similar situation is taken into account. There is however a
substantial difference between the equation considered in \cite{PaZa-04a} and our equation, that in the interval $[T_1,T]$
can be written as
\begin{equation}\label{eq-reducedbis}
x' = h(y), \qquad y' = \mu a^-(t) g(x).
\end{equation}
Indeed, in our case we cannot exclude that some solutions are not globally defined on $[T_1,T],$ due to the presence of possible blow-up phenomena
in some quadrants. To overcome this difficulty, we follow an usual truncation argument. More precisely, recalling the ``large''
constant $\mathfrak{S}_0$ introduced in \eqref{eq-frak}, we define the truncated functions
$$\hat{g}(x):=
\left\{
\begin{array}{lll}
g(-\mathfrak{S}_0), \; &\forall \, x\leq - \mathfrak{S}_0\\
g(x), \; &\forall\, x\in [-\mathfrak{S}_0,\mathfrak{S}_0]\\
g(\mathfrak{S}_0), \; &\forall \, x\geq \mathfrak{S}_0
\end{array}
\right.
$$
and
$$\hat{h}(y):=
\left\{
\begin{array}{lll}
h(-\mathfrak{S}_0) + y +\mathfrak{S}_0, \; &\forall \, y\leq - \mathfrak{S}_0\\
h(y), \; &\forall\, y\in [-\mathfrak{S}_0,\mathfrak{S}_0]\\
h(\mathfrak{S}_0) + y - \mathfrak{S}_0, \; &\forall \, y\geq \mathfrak{S}_0
\end{array}
\right.
$$
which are locally Lipschitz continuous with $g$ bounded and $h$ having a linear growth. Now the uniqueness and the global existence of
the solutions in the interval $[T_1,T]$ for the solutions of the truncated system
\begin{equation}\label{eq-truncated}
x' = \hat{h}(y), \qquad y' = \mu a^-(t) \hat{g}(x)
\end{equation}
is guaranteed.
Apart for few minor details we can closely follow the proof of \cite[Theorem 3.1]{PaZa-04a} for the interval when the weight function is negative.
Of course, now the result will be valid for the Poincar\'{e} map $\hat{\Phi}_{T_1}^{T}$ associated with system
\eqref{eq-truncated} for the time-interval $[T_1,T].$
If we treat with such technique the Poincar\'{e} map $\hat{\Phi}_{T_1}^{T}\,,$ we can prove that
any path $\gamma$ in ${\mathcal M}_1$ joining the two sides of ${\mathcal M}^-_1$ contains a sub-path $\sigma$ with
$\bar{\sigma}\subseteq {\mathcal M}^-_1$ such that $\hat{\Phi}_{T_1}^t(\bar{\sigma}) \subset D[0,\mathfrak{S}_0]$ for all
$t\in [T_1,T]$ and such that $\hat{\Phi}_{T_1}\circ \sigma$ is a path in ${\mathcal P}_1$ joining the opposite sides of ${\mathcal P}^-_{1}\,.$
This in turn implies
\begin{equation*}
\hat{\Phi}_{T_1}^{T}: \widehat{{\mathcal M}_{1}}\sap \widehat{{\mathcal P}_{1}}.
\end{equation*}
On the other hand, the condition $\hat{\Phi}_{T_1}^t(\bar{\sigma}) \subset D[0,\mathfrak{S}_0]$ for all
$t\in [T_1,T]$ implies that $\hat{\Phi}_{T_1}^t(\bar{\sigma}) = {\Phi}_{T_1}^t(\bar{\sigma})$ for all
$t\in [T_1,T]$ and therefore we have that
\begin{equation*}
{\Phi}_{T_1}^{T}: \widehat{{\mathcal M}_{1}}\sap \widehat{{\mathcal P}_{1}}.
\end{equation*}
All the other instances of \eqref{sap2} can be verified in the same manner. This completes the proof of the theorem.
\qed
\end{proof}

As a byproduct of the method of proof we have adopted, we are able to classify the nodal properties of the four $T$-periodic solutions as follows.
\begin{proposition}\label{pr-B1} Let $g, h$ be locally Lipschitz continuous functions satisfying $(C_{0})$
and at least one between the four conditions
$(h_{\pm})$ and $(g_{\pm})$.
For each positive integer
$k$ there exists $\Lambda_k > 0$, such that for every $\lambda > \Lambda_k$ there exists $\mu^* = \mu^*(\lambda)$
such that for each $\mu > \mu^*$ and $j=1,\dots,k,$ there are at least four $T$-periodic solutions for system
\eqref{eq-1.1} which can be classified as follows:
\begin{itemize}
\item[$\circ\;$] one solution with $x(0) > 0, x'(0) > 0$ and with $x(\cdot)$ having exactly $2j$ zeros in $]0,T_1[$ and no zeros in $[T_1,T];$
\item[$\circ\;$] one solution with $x(0) > 0, x'(0) > 0$ and with $x(\cdot)$ having exactly $2j-1$ zeros in $]0,T_1[$ and one zero in $[T_1,T];$
\item[$\circ\;$] one solution with $x(0) < 0, x'(0) < 0$ and with $x(\cdot)$ having exactly $2j$- zeros in $]0,T_1[$ and no zeros in $[T_1,T];$
\item[$\circ\;$] one solution with $x(0) < 0, x'(0) < 0$ and with $x(\cdot)$ having exactly $2j-1$ zeros in $]0,T_1[$ and one zero in $[T_1,T].$
\end{itemize}
\end{proposition}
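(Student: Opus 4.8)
The plan is to derive the classification as a direct reading of the construction in the proof of Theorem~\ref{th-B1}, since for a fixed $j$ the four solutions are precisely the fixed points localized in the sets $H'_{1,j},H''_{1,j}\subseteq{\mathcal P}_1$ and $H'_{2,j},H''_{2,j}\subseteq{\mathcal P}_2$, whose definitions already encode the angular behaviour on $[0,T_1]$. I would work under $(g_-)$, the cases $(g_+),(h_\pm)$ being analogous. Throughout I use the clockwise angular coordinate $\theta(\cdot,z)$ of that proof, for which $x=0$ corresponds to $\theta\equiv 0\pmod{\pi}$ and $y=0$ to $\theta\equiv \pi/2\pmod{\pi}$; hence a zero of $x(\cdot)$ occurs exactly when $\theta$ crosses an integer multiple of $\pi$, while a crossing of the $x$-axis leaves the sign of $x$ unchanged.

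First I would fix the signs of $x(0)$ and $x'(0)=y(0)$. A fixed point in ${\mathcal P}_1\subseteq Q_1$ has $x(0),y(0)\ge 0$ and one in ${\mathcal P}_2\subseteq Q_3$ has $x(0),y(0)\le 0$; the strict inequalities follow from transversality, since by $(C_0)$ one has $x'=h(y)\neq 0$ whenever $x=0\neq y$ (so $x(0)=0$ would produce a spurious zero at $t\equiv 0$, contradicting the total count) and the stretching construction localizes the fixed point in the interior of the sector, away from the coordinate axes. I would then read the number of zeros of $x$ on $]0,T_1[$ straight from the angular intervals defining the sets: for $z\in H'_{1,j}$ one has $\theta(0,z)\in(0,\pi/2)$ and $\theta(T_1,z)\in[\tfrac{\pi}{2}+2j\pi,\pi+2j\pi]$, so $\theta$ sweeps across exactly $\pi,2\pi,\dots,2j\pi$, giving $2j$ zeros and placing the solution in ${\mathcal M}_1=Q_4$ at time $T_1$; for $z\in H''_{1,j}$ the terminal interval $[\tfrac{3\pi}{2}+2(j-1)\pi,2j\pi]$ gives $2j-1$ crossings and terminus ${\mathcal M}_2=Q_2$. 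The symmetric computation on ${\mathcal P}_2$ (with $\theta(0,z)\in[\pi,\tfrac{3}{2}\pi]$) yields $2j$ zeros and terminus ${\mathcal M}_2$ for $H'_{2,j}$, and $2j-1$ zeros and terminus ${\mathcal M}_1$ for $H''_{2,j}$.

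The substance of the proof lies in counting the zeros of $x$ on $[T_1,T]$, the interval of negativity, where the flow of \eqref{eq-reducedbis} is of saddle type. Each fixed point closes up by composing its first-phase relation with the matching return in \eqref{sap2}, so the four relevant passages are ${\mathcal M}_1\!\to\!{\mathcal P}_1$ ($Q_4\!\to\!Q_1$), ${\mathcal M}_2\!\to\!{\mathcal P}_1$ ($Q_2\!\to\!Q_1$), ${\mathcal M}_2\!\to\!{\mathcal P}_2$ ($Q_2\!\to\!Q_3$) and ${\mathcal M}_1\!\to\!{\mathcal P}_2$ ($Q_4\!\to\!Q_3$). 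To count the crossings of $\{x=0\}$ I would invoke the a priori estimate $\text{rot}_z(T_1,T)>-\tfrac{1}{2}$ of \cite[Lemma 3.1]{BZ-13}, already used in Step~2 of the proof of Theorem~\ref{th-A1}; together with the fact that in the saddle region the unstable branches prevent a trajectory that has entered a quadrant from re-crossing the same axis, this confines the second-phase rotation to $(-\tfrac{1}{2},\tfrac{1}{2})$, so that the passage meets the shared axis of its two (adjacent) quadrants exactly once and meets no other axis. Consequently the passages $Q_4\!\to\!Q_1$ and $Q_2\!\to\!Q_3$ cross only the $x$-axis and contribute no zero, whereas $Q_2\!\to\!Q_1$ and $Q_4\!\to\!Q_3$ cross the $y$-axis once and contribute exactly one zero. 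Matching this with the first-phase data of the previous paragraph produces precisely the four listed cases, and in each the zeros on $]0,T_1[$ and on $[T_1,T]$ sum to $2j$, in agreement with Theorem~\ref{th-B1}.

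The hard part will be making this second-phase count rigorous in spite of the possible blow-up of solutions on $[T_1,T]$. I would deal with it exactly as in Theorem~\ref{th-B1}: restrict attention to the sub-path supplied by \eqref{sap2}, whose image under the truncated flow \eqref{eq-truncated} stays in $D[\mathfrak{S}_0]$, where $\hat{\Phi}_{T_1}^{t}=\Phi_{T_1}^{t}$ and where the solutions remain in a bounded annular region on which the rotation estimates genuinely hold, so that no escape to infinity or extra oscillation can corrupt the count.
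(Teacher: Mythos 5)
Your proposal is correct and follows essentially the same route as the paper, which states Proposition \ref{pr-B1} precisely as a byproduct of the proof of Theorem \ref{th-B1}: the zero counts on $]0,T_1[$ are read off from the angular intervals defining $H'_{i,j}$ and $H''_{i,j}$ (the paper itself notes the ``if and only if'' characterization of these sets), while the count on $[T_1,T]$ follows from the positive invariance of the first and third quadrants under the negative-phase flow together with the truncation argument used for \eqref{sap2}, exactly as you argue. Your quadrant bookkeeping ($Q_4\to Q_1$ and $Q_2\to Q_3$ contributing no zero of $x$, $Q_2\to Q_1$ and $Q_4\to Q_3$ contributing exactly one) reproduces the classification already displayed in the stepwise case of Section \ref{section-3}, so nothing essential is missing.
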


\medskip
\begin{remark}\label{rem-3.2}{We present some observations related to the proof of Theorem \ref{th-B1}.

1. In view of the results in \cite{PaZa-04a} and the notation recalled in the Introduction,
our proof implies that
$$
\Phi_{0}^{T}: \widehat{{\mathcal P}_{i}}\sap^{2k} \widehat{{\mathcal P}_{i}}\,\quad i=1,2.
$$
As a consequence, there are two compact invariant sets contained in ${\mathcal P}_1$
and ${\mathcal P}_2\,,$ respectively, where $\Phi_{0}^{T}$ induces chaotic dynamics on $2k$ symbols. Also for any periodic
sequence of symbols, subharmonic solutions, associated with that periodic sequence, do exist (see \cite{MPZ-09}).

2. In principle, our approach could be extended to differential systems in which the weight function displays a finite number
of positive humps separated by negative ones. Although the feasibility of this study is quite clear, we have not pursued this line of research
in this article for sake of conciseness.

3. A comparison between Theorem \ref{th-A1} and Theorem \ref{th-B1} suggests that it could be interesting to present examples
of differential systems in which there are exactly two (respectively four) $T$-periodic solutions with given nodal properties
and then discuss the change in the number of solutions using $\mu$ as a bifurcation parameter.}
\finis
\end{remark}

\section{Subharmonic solutions}\label{section-5}

In this Section we briefly discuss how to adapt the proofs of Theorem \ref{th-A} and Theorem \ref{th-B} to obtain subharmonic solutions.
Throughout the Section we suppose that $a:{\mathbb R}\to {\mathbb R}$ is a continuous $T$-periodic weight function
satisfying $(a^*).$ As before, more general regularity conditions on $a(\cdot)$ can be considered.

Speaking of subharmonic solutions, we must observe that if $u=(x,y)$ is a $mT$-periodic solution of \eqref{eq-1.1},
then also $u_i(\cdot):= u(\cdot - iT)$ is a $mT$-periodic solution for all $i=1,\dots,m-1.$ Such solutions, although distinct, are
considered to belong to the same periodicity class.

First of all, we look at the proof of Theorem \ref{th-A1} and give estimates on the rotation numbers on the interval $[0,mT]$
for some integer $m\geq 2.$ Iterating the argument in Step 1-4 and taking a smaller $r_0$ if necessary, we can prove that for
$\lambda > \Lambda_{k}$ as in \eqref{Lambdak}, we obtain
\begin{equation}\label{smallrotationsm}
\text{rot}_{z}(mT) > mk, \quad\forall\, z \; \text{with }\, ||z||= r_0\,.
\end{equation}
Repeating the computation in Step 5-6 for the interval $[0,mT]$ and taking a larger $R_0$ if necessary, we get
\begin{equation}\label{largerotationsm}
\text{rot}_{z}(mT) < 1, \quad\forall\, z \; \text{with }\, ||z||= R_0\,.
\end{equation}
In this manner we have condition $(TC_{m})$ satisfied for $\mathfrak{b}=mk$ and $\mathfrak{a}=1.$ Now, if we fix an integer $j\in \{1,\dots,mk\}$
which is relatively prime with $m,$ we obtain at least two $mT$-periodic solutions of system \eqref{eq-1.1} with $x(\cdot)$ having exactly
$2j$ simple zeros in the interval $[0,mT[$. As $m$ and $j$ are coprime numbers, these solutions cannot be $\ell T$-periodic for some
$\ell\in \{1,\dots,m-1\}.$ Since, by $(a^*),$ $T$ is the minimal period of $a(\cdot),$ we conclude that $mT$ is the minimal period of the
solution $(x,y)$ of \eqref{eq-1.1} (see \cite{DiZa-93}, \cite{DiZa-96} for previous related results and how to prove
the minimality of the period via the information about the rotation number).
In this manner, the following result is proved.

\begin{theorem}\label{th-Asub}
Let $g,h: {\mathbb R} \to {\mathbb R}$ be locally Lipschitz continuous functions satisfying $(C_{0})$
and at least one between the four conditions
$(h_{\pm})$ and
$(g_{\pm}).$ Assume, moreover, the global continuability for the solutions of \eqref{eq-1.1}. Let $m\geq 2$ be a fixed integer.
Then, for each positive
integer $k,$ there exists $\Lambda_k > 0$
such that for each $\lambda > \Lambda_{k}$ and $j = 1,\dots, mk,$ with $j$ relatively prime with $m$, the system \eqref{eq-1.1}
has at least two periodic solutions $(x(t),y(t))$ of minimal period $mT$, not belonging to the same periodicity class.
\end{theorem}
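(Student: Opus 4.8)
The plan is to run the Poincar\'{e}--Birkhoff machinery of Theorem \ref{th-A1} not on the Poincar\'{e} map $\Phi = \Phi_0^T$ but on its $m$-th iterate $\Phi^{(m)} = \Phi_0^{mT}$, which is again an area-preserving homeomorphism defined on the whole disc $D[R_0]$ thanks to the assumed global continuability. The only substantial thing to produce is the twist condition $(TC_m)$ on $[0,mT]$ with $\mathfrak{a}=1$ and $\mathfrak{b}=mk$; once this is available, the version of the theorem quoted after Theorem \ref{th-A} yields, for every integer $j$ with $1\le j\le mk$, at least two fixed points of $\Phi^{(m)}$ whose rotation number on $[0,mT]$ equals $j$, hence two $mT$-periodic solutions whose first component has exactly $2j$ simple zeros in $[0,mT[\,$.

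To get the inner estimate \eqref{smallrotationsm} I would exploit the additivity of the rotation number, $\text{rot}_z(0,mT) = \sum_{i=0}^{m-1}\text{rot}_{z_i}(iT,(i+1)T)$ with $z_i = \Phi_0^{iT}(z)$, together with the $T$-periodicity of $a$. On each positivity window $[iT,iT+T_1]$ the computation of Step 1 gives a contribution larger than $k+1$, while on each negativity window the bound of Step 2 (from \cite[Lemma 3.1]{BZ-13}) costs at most $1/2$; summing over the $m$ periods yields more than $m(k+\tfrac12) > mk$. The point requiring care is that this per-period estimate only holds while the solution stays in the small-amplitude regime $\|u(t)\|\le r^\varepsilon$, so I would invoke the elastic property $(i_1)$ of Step 3, now on $[0,mT]$, to choose $r_0$ small enough that $\|u(t)\|\le r^\varepsilon$ for all $t\in[0,mT]$. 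Symmetrically, for the outer estimate \eqref{largerotationsm} I would rerun Steps 5--6 over $[0,mT]$: the a priori bound $M_3$ (now built from integrals over the longer interval, still finite) together with $(i_2)$ lets me pick $R_0$ so large that any solution starting on $\|z\|=R_0$ stays above $M_3$ throughout $[0,mT]$, hence never crosses the second or third quadrant and so completes less than one full clockwise turn, giving $\text{rot}_z(mT)<1$.

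The genuinely delicate part, and the one I expect to be the main obstacle, is upgrading the two fixed points into two subharmonics of \emph{minimal} period $mT$ lying in distinct periodicity classes. Here I would restrict $j$ to be relatively prime with $m$. If one of the solutions had minimal period $pT$ with $p\mid m$ and $p<m$, then additivity and periodicity force $\text{rot}_z(mT) = (m/p)\,\text{rot}_z(pT)$, so $m/p$ divides $j$; since $m/p$ also divides $m$ and $\gcd(j,m)=1$, this gives $m/p=1$, a contradiction, and because $T$ is the minimal period of the weight by $(a^*)$ no period strictly between multiples of $T$ can occur either. This is exactly the mechanism of \cite{DiZa-93,DiZa-96}, which also settles that the two solutions supplied by Poincar\'{e}--Birkhoff, each generating a $\Phi$-orbit of exactly $m$ distinct points by minimality, do not belong to the same periodicity class. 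I regard this last distinctness as the subtle step: it rests on counting the fixed points with rotation number $j$ together with the cited subharmonic theory, rather than on the bare ``at least two fixed points'' of the Poincar\'{e}--Birkhoff statement.
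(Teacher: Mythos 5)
Your proposal is correct and takes essentially the same route as the paper: you iterate Steps 1--6 of the proof of Theorem \ref{th-A1} over $[0,mT]$ (shrinking $r_0$ and enlarging $R_0$ via the elastic properties) to obtain $(TC_m)$ with $\mathfrak{a}=1$ and $\mathfrak{b}=mk$, apply the Poincar\'{e}--Birkhoff theorem to $\Phi_0^{mT}$, and then use the coprimality of $j$ and $m$ together with the minimality of the period $T$ of $a(\cdot)$ from $(a^*)$ to conclude that the period $mT$ is minimal. Your explicit divisibility argument and your flagged caveat about the two solutions lying in distinct periodicity classes correspond exactly to the step the paper handles by citing \cite{DiZa-93} and \cite{DiZa-96}.
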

\noindent
Also for Theorem \ref{th-Asub} the same observation as in Remark \ref{rem-3.1}.2 applies, namely, using Fonda-Ure\~{n}a version of the
Poincar\'{e}-Birkhoff theorem, we can remove the local Lipschitz condition outside the origin.

\medskip

Looking for an extension of Theorem \ref{th-B} to the case of subharmonic solutions, in view of
Remark \ref{rem-3.2}.1 and \cite{PaZa-04a},\cite{MPZ-09}, the condition
$$
\Phi_{0}^{T}: \widehat{{\mathcal P}_{i}}\sap^{2k} \widehat{{\mathcal P}_{i}}\,\quad i=1,2
$$
implies the following property with respect to periodic solutions. Let ${\mathcal P}= {\mathcal P}_i$ for $i=1,2.$
There exists $2k$ pairwise disjoint compact sets $S_1,\dots S_{2k} \subseteq {\mathcal P}$ such that for each
$m$-periodic two-sided sequence $\xi:= (\xi_n)_{n\in{\mathbb Z}},$ with $\xi_n\in \{1,\dots,2k\}$ for all $n\in {\mathbb Z},$
there exists a fixed point $z^*$ of $\Phi_{0}^{mT}$ such that $\Phi_{0}^{nT}(z^*)\in S_{\xi_n}\,, \forall\, n\in {\mathbb Z}.$
In this case we say that the trajectory associated with the initial point $z^*$ follows the periodic \textit{itinerary}
$(\dots,S_{\xi_0},\dots,S_{\xi_n},\dots).$

From this observation,
the following result holds.

\begin{theorem}\label{th-Bsub}
Let $g,h: {\mathbb R} \to {\mathbb R}$ be locally Lipschitz continuous functions satisfying $(C_{0})$
and at least one between the four conditions
$(h_{\pm})$ and
$(g_{\pm}).$ Let $m\geq 2$ be a fixed integer. Then, for each positive
integer $k,$ there exists $\Lambda_k > 0$
such that for each $\lambda > \Lambda_{k}$ there exists $\mu^*= \mu^*(\lambda)$ such that for each $\mu > \mu^*$ the following property holds:
given any periodic two-sided sequence $(\xi_n)_{n\in{\mathbb Z}},$ with $\xi_n\in \{1,\dots,2k\}$ for all $n\in {\mathbb Z},$
with minimal period $m$, the system \eqref{eq-1.1}
has at least two periodic solutions $(x(t),y(t))$ of minimal period $mT$, following an itinerary of sets
associated with $(\xi_n)_{n\in{\mathbb Z}}$ and not belonging to the same periodicity class.
\end{theorem}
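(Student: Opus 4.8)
The plan is to piggyback entirely on the stretching relation already produced in the proof of Theorem~\ref{th-B1}, so that no new rotation-number estimates are needed and the whole statement reduces to symbolic bookkeeping plus a minimal-period argument. Working under $(g_-)$ as in the previous proofs (the cases $(g_+)$, $(h_\pm)$ being analogous), I would fix $\varepsilon>0$ and $\Lambda_k$ exactly as in \eqref{Lambdak}, take $\lambda>\Lambda_k$, and choose $\mu^*=\mu^*(\lambda)$ so that Steps~1--6 of Theorem~\ref{th-A1} together with the truncation argument on $[T_1,T]$ deliver
$$
\Phi_{0}^{T}: \widehat{{\mathcal P}_{i}}\sap^{2k} \widehat{{\mathcal P}_{i}}\,,\qquad i=1,2,
$$
for the two annular sectors ${\mathcal P}_1\subseteq Q_1$ and ${\mathcal P}_2\subseteq Q_3$. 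The decisive structural point is that this relation already lives at the single-period level $T$; unlike Theorem~\ref{th-Asub}, which required the twist condition $(TC_m)$ over the whole interval $[0,mT]$, here the horseshoe over one period automatically generates every subharmonic, so all the analytic work is inherited and nothing has to be re-estimated over $[0,mT]$.

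Next I would invoke the symbolic-dynamics consequence recorded just before the statement (following \cite{PaZa-04a} and \cite{MPZ-09}): for each $i=1,2$ there are $2k$ pairwise disjoint compact sets $S_1,\dots,S_{2k}\subseteq{\mathcal P}_i$ such that, for the prescribed $m$-periodic sequence $(\xi_n)_{n\in{\mathbb Z}}$ with values in $\{1,\dots,2k\}$, one obtains a fixed point $z^*$ of $\Phi_{0}^{mT}$ with $\Phi_{0}^{nT}(z^*)\in S_{\xi_n}$ for all $n$. Applying this once in ${\mathcal P}_1$ and once in ${\mathcal P}_2$ yields two $mT$-periodic solutions of \eqref{eq-1.1} realizing the required itinerary. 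Since all the iterates $\Phi_{0}^{nT}(z^*)$ of the first solution stay in ${\mathcal P}_1\subseteq Q_1$ while those of the second stay in ${\mathcal P}_2\subseteq Q_3$, every time-translate $u(\cdot-iT)$ of the first still sits in $Q_1$ at integer multiples of $T$ and so cannot coincide with the second; this is exactly what certifies that the two solutions do not belong to the same periodicity class.

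It remains to show each solution has $mT$ as its \emph{minimal} period, which is the step that needs real care. First I would exploit the disjointness of $S_1,\dots,S_{2k}$: were the solution $\ell T$-periodic for some $\ell\in\{1,\dots,m-1\}$, then $\Phi_{0}^{\ell T}(z^*)=z^*$ would give $\Phi_{0}^{(n+\ell)T}(z^*)=\Phi_{0}^{nT}(z^*)$, and since each iterate lies in a single $S_{\xi_n}$ we would force $\xi_{n+\ell}=\xi_n$ for all $n$, contradicting the minimality $m$ of the period of $(\xi_n)$. This rules out every period $\ell T$ with $1\le\ell\le m-1$, but not yet periods that are not integer multiples of $T$. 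To close this gap I would argue that the minimal period $\tau$ of a nonconstant solution is itself a multiple of $T$: the period condition reads $\bigl(a_{\lambda,\mu}(t+\tau)-a_{\lambda,\mu}(t)\bigr)g(x(t))=0$ for all $t$, and since $x(\cdot)$ has only isolated simple zeros we have $g(x(t))\neq0$ on a dense open set, so continuity of $a_{\lambda,\mu}$ forces $a_{\lambda,\mu}(t+\tau)=a_{\lambda,\mu}(t)$ everywhere; as $(a^*)$ makes $T$ the minimal period of $a_{\lambda,\mu}$, we get $\tau=\ell T$ with $\ell\mid m$, and the first step then excludes $\ell<m$, leaving $\tau=mT$. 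This multiple-of-$T$ reduction — the same mechanism used for Theorem~\ref{th-Asub} and detailed in \cite{DiZa-93},\cite{DiZa-96} — is precisely the main obstacle, since the itinerary argument alone only discards genuine $\ell T$-periods and says nothing about a hypothetical minimal period of the form $mT/q$ with $q\nmid m$.
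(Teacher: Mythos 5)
Your proposal is correct and follows essentially the same route as the paper: you reuse the relation $\Phi_{0}^{T}:\widehat{{\mathcal P}_{i}}\sap^{2k}\widehat{{\mathcal P}_{i}}$ ($i=1,2$) from Remark~\ref{rem-3.2}, invoke the itinerary-following fixed points of $\Phi_{0}^{mT}$ guaranteed by \cite{PaZa-04a},\cite{MPZ-09}, and take one solution in each of ${\mathcal P}_1\subseteq Q_1$ and ${\mathcal P}_2\subseteq Q_3$ to separate the periodicity classes, exactly as the paper does. Your explicit reduction of the minimal period to an integer multiple of $T$ (via the isolated simple zeros of $x$ and the fact that $(a^*)$ makes $T$ the minimal period of $a_{\lambda,\mu}$), followed by the exclusion of $\ell T$-periods with $\ell<m$ through the disjointness of the sets $S_1,\dots,S_{2k}$ and the minimality of $m$, is precisely the mechanism the paper delegates to \cite{DiZa-93},\cite{DiZa-96}, so it fills in detail rather than taking a different route.
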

\noindent
A simple comparison of the two theorems shows that when $m$ grows, the number of different subhamonics found by Theorem \ref{th-Bsub}
largely exceeds the number of those obtained by Theorem \ref{th-Asub}. The number of $m$-order subharmonics can be precisely
determined by a combinatorial formula coming from the study of \textit{aperiodic necklaces} \cite{Fe-18}.

\section{Examples and applications}\label{section-6}

We propose a few examples of equations, coming from the literature, which fit into the framework of our theorems.

As a first case, we consider the following Duffing type equation with relativistic acceleration
\begin{equation}\label{eq-6.1}
(\varphi(u'))'+ a_{\lambda,\mu}(t)g(u) = 0,
\end{equation}
where
\begin{displaymath}
\varphi(s)=\dfrac{s}{\sqrt{1-s^2}}.
\end{displaymath}
A variant of \eqref{eq-6.1} in the form of $(\varphi(u'))' + g(t,u) = 0$ is considered in \cite{BG-13} where pairs of
periodic solutions with prescribed nodal properties are found by means of the Poincar\'{e}-Birkhoff theorem
for a function $g(t,u)$ having at most linear growth in $u$ at infinity.
Equation \eqref{eq-6.1} can be equivalently written as
\begin{align*}
\begin{cases}
x'=\varphi^{-1}(y)\\
y'=-a_{\lambda,\mu}(t)g(x)
\end{cases}
\end{align*}
which is the same as system \eqref{eq-1.1} with $h=\varphi^{-1}.$ In this case, since $\varphi^{-1}$ is bounded, both $(h_{-})$ and $(h_{+})$
are satisfied and the global existence of the solutions is guaranteed.
Moreover all the other assumptions required for $h$ in $(C_{0})$ are satisfied with $h_0 =1.$ Hence all the results of the paper
apply to \eqref{eq-6.1} once we assume that $(C_0)$ holds for $g.$ Notice that we do not need any growth assumption on $g.$

\medskip

Our second example is inspired by the work of Le and Schmitt \cite{LS-95} where the authors proved the existence of $T$-periodic solutions
for the second order equation
\begin{equation}\label{eq-6.2}
u'' + k(t)e^u = p(t),
\end{equation}
with $k,p$ $T$-periodic functions, with $k$ changing sign, $p$ with zero mean value and such that
$$\int_{0}^{T} k(t)e^{u_0(t)}dt < 0,$$
where $u_0(t)$ denotes a $T$-periodic solution of $u'' = p(t).$ If
we call $\tilde{u}(t)$ the $T$-periodic solution of \eqref{eq-6.2} whose existence is guaranteed by \cite[Remark 6.4]{LS-95},
and set
\begin{displaymath}
u(t)=x(t)+\tilde{u}(t),
\end{displaymath}
then \eqref{eq-6.2} is transformed to the equivalent equation
\begin{displaymath}
x''+q(t)(e^x-1)=0
\end{displaymath}
with
\begin{displaymath}
q(t):=k(t)e^{\tilde{u}(t)}
\end{displaymath}
which changes sign if and only if $k(t)$ changes sign. Thus we enter in the setting of \eqref{eq-1.2} and Theorem \ref{th-B}
can be applied. Note that in general, in an interval where $k<0$ we may have blow-up of the solutions in the first quadrant of the phase-plane
and thus the Poincar\'{e} map cannot be defined on a whole (large) annulus surrounding the origin.
Clearly, in order to apply our theorem,
we will just need to adapt our conditions on the weight $a_{\lambda,\mu}(t)$ to the coefficients $k(t)$ and $p(t).$

\medskip

As a last example, we consider a model adapted from the classical Lotka-Volterra predator-prey system.
We take into account the system
\begin{align*}
\begin{cases}
P'=P(-c_1(t)+d(t)N)\\
N'=N(c_2(t)-b(t)P)\\
\end{cases}
\end{align*}
which represents the dynamics of a prey population $N(t)>0$ under the effect of a predator population $P(t)>0.$ Notice that the order in which the
two equations appear in the above system is not the usual one but it is convenient so to enter the setting of system \eqref{eq-1.1}.
All the coefficients
$b,c_1,c_2,d$ are continuous and $T$-periodic functions.
In \cite{DiZa-96} the existence of infinitely many subharmonic solutions was proved under the assumption that $c_1(t)$ and $c_2(t)$ have positive average
and $b(t)\succ 0,$ $d(t)\succ 0$ in $[0,T]\,.$ Extensions to higher dimensional systems have been recently obtained in \cite{FT-18} under similar sign conditions on the coefficients.
Some results about the stability of the solutions for this model are obtained as a special case in \cite{LOT-96}. For the main Lotka-Volterra model thereby proposed the search of subharmonics solutions has been recently addressed \cite{LoMu-18} under some specific assumptions on the averages of $b(t)$ and $d(t)$, in a framework that can be regarded as a counterpart to ours.

Our aim now is to discuss a case in which $b(t)$ may be negative on some subinterval. We perform a change of variables setting $u=\log P$ and $v=\log N$ and obtain the new system
\begin{align*}
\begin{cases}
u'=-c_1(t)+d(t)e^v\\
v'=c_2(t)-b(t)e^u.
\end{cases}
\end{align*}
Suppose now a $T$-periodic solution $(\tilde{u}(t),\tilde{v}(t))$ is given. With a further change of variables we can write the generic solutions as
\begin{align*}
u(t)=x(t)+\tilde{u}(t)\\
v(t)=y(t)+\tilde{v}(t)
\end{align*}
and by substitution in the previous system we arrive at
\begin{align*}
\begin{cases}
x'=d(t)e^{\tilde{v}(t)}(e^y-1)\\
y'=-b(t)e^{\tilde{u}(t)}(e^x-1)\,.
\end{cases}
\end{align*}
At this point, we can adapt the coefficients in order to enter in the frame of system \eqref{eq-1.1}. More precisely, we suppose that
$d(t)e^{\tilde{v}(t)} \equiv \text{constant} = D >0$, so that $h(y):= D(e^y-1),$ $g(x):= e^x-1$ and set
\begin{displaymath}
q(t):=b(t)e^{\tilde{u}(t)},
\end{displaymath}
which changes sign if and only if $b(t)$ changes sign. Thus we enter in the setting of \eqref{eq-1.2} and Theorem \ref{th-B}
can be applied. The same remark of the previous case holds. In particular, in order to apply our theorem,
we need to translate our conditions on the weight $a_{\lambda,\mu}(t)$ to the coefficients $b(t).$

%\bibliographystyle{plain}
%\bibliography{DZ-references}

\end{document}